\newtheorem{remark}{Remark}[section] 
\newcommand\dd{\mathrm{d}}
\newcommand\pp{\partial}
\newcommand\x{\bm{x}}
\newcommand\uvec{\mathbf{u}}
\newcommand\X{\mathbf{X}}
\newcommand\y{\bm{y}}
\newcommand\dvec{\bm{d}}
\newcommand\avec{\bm{\xi}}
\begin{document}

\title{A Variational Lagrangian scheme for a phase field model: a discrete energetic variational approach}

\author{Chun Liu\thanks{Department of Applied Mathematics, Illinois Institute of Technology, Chicago, IL 60616, USA (\email{cliu124@iit.edu, ywang487@iit.edu})} \and Yiwei Wang\footnotemark[1]}
    % ()}}

\date{}
\maketitle                            % 生成标题
% \tableofcontents                      % 插入目录
% \newpage

\begin{abstract}
  In this paper, we propose a variational Lagrangian scheme for a modified phase-field model, which can compute the equilibrium states of the original Allen-Cahn type model. Our discretization is based on a prescribed energy-dissipation law in terms of the flow map. By employing a discrete energetic variational approach, this scheme preserves the variational structure of the continuous energy-dissipation law and is energy stable.
  Plentiful numerical tests show that, by choosing the initial value properly, our methods can compute the desired equilibrium state and capture the thin diffuse interface with a small number of mesh points.
\end{abstract}

\section{Introduction}

Phase field models, i.e., diffuse interface models, have been a successful tool in studying many problems arise in physics, biology, material science  and  image processing \cite{chen2002phase, feng2005energetic, chen1998applications, du2009energetic, samson2000variational, bertozzi2012diffuse, brett2014phase}.
Due to the important applications, there is substantial interest in developing efficient numerical methods for phase-field models \cite{liu2003phase, feng2007analysis, feng2006spectral, di2008general, hua2011energy, shen2009efficient, sulman2011optimal, church2019high, xu2019stability, shen2019new, Shen2012modeling}.

From a modeling perspective, phase-field models can be classified into two categories, known as Allen-Cahn type \cite{allen1972ground} and Cahn-Hilliard type \cite{CH58}. The Allen-Cahn type models are typical examples of $L^2-$gradient flows \cite{shen2019new}, while the Cahn-Hilliard type models, which are concerned with a conserved quantity, are examples of $H^1$-diffusions \cite{Giga2017}. Although numerical methods for both types of phase-field models are well developed \cite{liu2003phase, shen2009efficient, shen2019new, du2019phase}, most of them are \emph{Eulerian methods}, which solve the equation of the ``phase'' function $\varphi$ in a fixed grid \cite{shen2009efficient}.
In order to resolve the thin diffuse interface, one must have mesh sizes much smaller than the width of the thin diffuse interface \cite{unverdi1992front, merriman1994motion, du2019phase}, which requires huge computational efforts.
% as pointed in \cite{unverdi1992front},the front has a finite thickness of order $O(h)$ does not change in time
% the computational cost is usually high if uniform meshes are used \cite{shen2009efficient}.
%  Eulerian methods for phase-field models are easy to implement, th
This difficulty is often handled by using adaptive mesh techniques \cite{provatas1998efficient, adler2011first} or moving mesh approaches \cite{feng2006spectral, di2008general, shen2009efficient, sulman2011optimal}.

For many real problems modeled by Allen-Cahn type phase field models, the goal is to find stationary states of the free energy functional.
The purpose of this paper is to propose a variational Lagrangian scheme for a modified phase-field model, which can compute the equilibrium states of the original Allen-Cahn type model. The approach presented here can be extended to other free energy minimization problem.Compared with Eulerian methods, Lagrangian methods, which are often self-adaptive, have potential advantages for problems involving singularity, sharp interface and free boundary. % However,
% So it  capturing the thin diffusive interfaces. 
Recently, there has been an increasing interest in applying Lagrangian schemes to generalized diffusions, such as the porous medium equation and nonlinear Fokker-Plank equations \cite{carrillo2009numerical, westdickenberg2010variational, carrillo2016numerical, junge2017fully, carrillo2017numerical, matthes2017convergent, carrillo2018lagrangian, liu2019lagrangian}. However, it is more difficult to construct Lagrangian schemes for $L^2$-gradient flows. Unlike generalized diffusions, which have natural variational structures on the Lagrangian maps \cite{evans2005diffeomorphisms, junge2017fully, carrillo2018lagrangian, liu2019lagrangian}, the variational structures of $L^2$-gradient flows are on the physical variables defined in the Eulerian coordinates. Moreover, as a drawback of all Lagrangian methods, the meshes of Lagrangian solutions may become too skew, which not only influence the accuracy of the solution, but also may result in premature termination of Lagrangian calculations \cite{huang2010adaptive}.

In order to overcome these difficulties, we first propose an energy-dissipation law for a phase-field model, given by
\begin{equation}\label{ED1}
\frac{\dd}{\dd t} \int_{\Omega} W(\varphi, \nabla \varphi) \dd \x = - \int_{\Omega} \gamma |\varphi_t|^2 + \nu |\nabla \uvec|^2 \dd \x,
\end{equation}
where $\varphi$ is a ``phase'' function satisfying a transport equation
\begin{equation}\label{Transport1}
\varphi_t + \nabla \varphi \cdot \uvec = 0,
\end{equation}
$\uvec$ is the virtual velocity associated with the Lagrangian map, and $W(\varphi, \nabla \varphi)$ is the free energy density. This model is inspired by phase-field models of mixture of two incompressible fluids \cite{yue2004diffuse, hyon2010energetic, adler2011first}.
For $\nu = 0$, this model employs the same energy-dissipation law of Allen-Cahn type models. So one can view (\ref{ED1}) as a modified Allen-Cahn type model. The additional term in the dissipation part of (\ref{ED1}) can be viewed as a regularization term on Lagrangian maps, which plays an essential role in calculations. The dissipation part imposes a mechanism to minimize the total free energy in terms of the Lagrangian map for the given initial condition $\varphi_0(\X)$.

% It is obvious that such modification would only change the dynamics of the system when approaching the equilibrium states, which are also equilibria of the original Allen-Cahn type model. 

By employing an energetic variational approach, we can obtain the corresponding PDE of this system, given by
\begin{equation}\label{Eq_u}
  \begin{cases}
    &  -  \nu \Delta \uvec + \gamma \left(\nabla \varphi \otimes \nabla \varphi \right) \uvec = - \nabla \cdot \left( \frac{\pp W}{\pp \nabla \varphi} \otimes  \nabla \varphi  -  W(\varphi, \nabla \varphi) \mathrm{I} \right) \\
    & \varphi_t + \nabla \varphi \cdot \uvec = 0, \\
  \end{cases}
\end{equation}
subject to suitable initial and boundary conditions. Formally, it is straightforward to reformulate (\ref{ED1}) and (\ref{Eq_u}) in terms of a Lagrangian map and its time derivative. % and we can construct a variational-structure-preserved Lagrangian scheme by a discrete energetic variational approach \cite{liu2019lagrangian} based on (\ref{ED1}).
% which can be viewed as stokes flows of mixture of two compressible fluids \cite{hyon2010energetic}. % One can view $\uvec$ is the velocity of the phase induced flow.
Hence, based on the energy-dissipation law (\ref{ED1}), we can construct a variational-structure-preserved Lagrangian scheme by employing a discrete energetic variational approach \cite{liu2019lagrangian}.

The rest of this paper is organized as follows. We first give a detailed description to our phase-field model in the next section. Then we construct our variational Lagrangian scheme by a discrete energetic variational approach in Sect. \ref{sec:scheme}. Plentiful numerical tests to validate our methods are shown in Sect. \ref{sec:results}.

\section{Model development}\label{sec:model}
In this section, we give a detailed description of our phase-field model by an energetic variational approach \cite{liu2009introduction, Giga2017}, including the motivation of proposing the energy-dissipation law (\ref{ED1}). 

\subsection{Energetic variational approach}
An energetic variational approach, originated from  pioneering work of Onsager \cite{onsager1931reciprocal, onsager1931reciprocal2} and Rayleigh \cite{strutt1871some} provides a general framework to determine the dynamics of system from a prescribed energy-dissipation law through two distinct variational processes: Least Action Principle (LAP) and Maximum Dissipation Principle (MDP)  \cite{liu2009introduction, Giga2017}. During the last decade, this approach has been successfully applied to build up many mathematical models in physics, chemistry and biochemistry \cite{feng2005energetic, liu2009introduction, sun2009energetic, eisenberg2010energy, Giga2017, wang2020field}.

For an isothermal closed system, an energy-dissipation is given by
\begin{equation}
\frac{\dd}{\dd t} E^{\text{total}}(t) = - 2 \mathcal{D}(t),
\end{equation}
which is the consequence of the first and second laws of thermodynamics \cite{Giga2017}.
Here $E^{\text{total}}$ is the total energy, which is the sum of the Helmholtz free energy $\mathcal{F}$ and the kinetic energy $\mathcal{K}$, and $2\mathcal{D}$ is the rate of energy dissipation. % which is related to the entropy production in thermodynamics.
The Least Action Principle states that the equation of motion for a Hamiltonian system can be derived from the variation of the action functional  $\mathcal{A}(\x) = \int_0^T \mathcal{K} - \mathcal{F} \dd t$ with respect to the flow maps $\x$ (the trajectory in Lagrangian coordinates) if applicable \cite{arnol2013mathematical, Giga2017}, i.e.,
\begin{equation}
  \delta \mathcal{A} =  \int_{0}^T \int_{\Omega(t)} (f_{\text{inertial}} - f_{\text{conv}})\cdot \delta \x  \dd \x \dd t.
\end{equation}
It gives a unique procedure to derive the conservative forces for the system. On the other hand, for a dissipative system ($\mathcal{D} \geq 0$), the dissipative force can be obtained by minimization of the dissipation functional $\mathcal{D}$ with respect to the ``rate'' $\x_t$ in the regime of linear response \cite{de2013non}, known as Onsager's Maximum Dissipation Principle (MDP), i.e.,
\begin{equation}
\delta \mathcal{D} = \int_{\Omega(t)} f_{\text{diss}} \cdot \delta \x_t~ \dd \x.
\end{equation}
Hence, the force balance condition ($f_{\text{inertial}} = f_{\text{conv}} + f_{\text{diss}}$) results in
\begin{equation}\label{FB}
 \frac{\delta \mathcal{D}}{\delta \x_t} = \frac{\delta \mathcal{A}}{\delta \x},
\end{equation}
which is the dynamics of the system. We refer the reader to \cite{Giga2017} for more detailed descriptions of energetic variational approaches and we only consider systems without kinetic energy, i.e. $\mathcal{K} = 0$, throughout this paper.

% Here w
% Here we only discuss the gradient-flow system, and refer the reader to \cite{Giga2017} for more detailed description of energetic variational approaches.
\iffalse
For a gradient flow system, the energy-dissipation law is given by
\begin{equation}\label{ED_1}
\frac{\dd}{\dd t} \mathcal{F}[\varphi]  = -   \int_{\Omega} G(\varphi) \varphi_t \cdot \varphi_t \dd \x,
\end{equation}
in which the kinetic energy $\mathcal{K} = 0$. The free energy $\mathcal{F}$ is often given by
\begin{equation}\label{Fun_I}
\mathcal{F}[\varphi] = \int_{\Omega} W(\varphi, \nabla \varphi) \dd \x,
\end{equation}
where $W(\varphi, \nabla \varphi)$ is the free-energy density. In order to simplify the presentation, we assume that the state variable $\varphi$ is a scalar through this section and $\varphi$ satisfies the boundary conditions such that all boundary terms will vanish after integration by parts.  % the derivations are almost the same when $\varphi$ is a vector or tensor.
\fi

\subsection{Energetic variational approaches to phase-field models}
From an energetic variational viewpoint, Allen-Cahn and Cahn-Hilliard type of models, provide a dynamics to minimize the free energy functional
\begin{equation}\label{mix_energy}
\mathcal{F}[\varphi, \nabla \varphi] = \int_{\Omega} W(\varphi, \nabla \varphi) \dd \x,
\end{equation}
for $\varphi$ in some admissible set $\mathcal{H}$ subject to some boundary conditions on $\pp \Omega$. Here,
\begin{equation}
  \varphi =
  \begin{cases}
    & ~1 \quad \text{Phase 1} \\
    & -1 \quad \text{Phase 2}, \\
   \end{cases}
\end{equation}
is a ``phase'' function that introduced to identify the two phases, $W(\varphi, \nabla \varphi)$ is the free energy density given by
\begin{equation}\label{Def_W_phi}
W(\varphi, \nabla \varphi) = \frac{1}{2} |\nabla \varphi|^2 + V(\varphi),
\end{equation}
where $V(\varphi)$ is the interfacial (potential) energy that often taken as a double-well potential
\begin{equation}
 V(\varphi) = \frac{1}{4 \epsilon^2} (\varphi^2 - 1)^2.
\end{equation}
Different phase-field models can be derived by different choices of admissible sets $\mathcal{H}$ and dissipation functional $2 \mathcal{D}$.

In Allen-Cahn type models,  $\mathcal{H}$ is often chosen to be $H^1(\Omega)$ with a suitable boundary conditions, and the energy-dissipation law is given by
\begin{equation}\label{ED_GD}
\frac{\dd}{\dd t} \mathcal{F}(\varphi, \nabla \varphi) = - \int_{\Omega} \frac{1}{\gamma} |\varphi_t|^2 \dd \x,
\end{equation}
where $\gamma > 0$ is the dissipation rate \cite{Giga2017}. We take $\gamma = 1$ in the following.
The energy-dissipation law (\ref{ED_GD}) can be viewed as a gradient flow of the phase function $\varphi(\x, t)$, which specify the dynamics approaching to the equilibrium of system.  According to the general framework of an energetic variational approach, the corresponding gradient flow equation can be derived by first performing LAP with respect to $\varphi$ and MDP with respect to $\varphi_t$:
\begin{equation}
  \begin{aligned}
    & \text{LAP}: \frac{\delta \mathcal{A}}{\delta \varphi} = - \frac{\delta \mathcal{F}}{\delta \varphi} =  \nabla \cdot \frac{\pp  W}{\pp \nabla \varphi} - \frac{\pp W}{\pp \varphi}, \\
    & \text{MDP}: \frac{\delta \frac{1}{2} \mathcal{D}}{\delta \varphi_t} = \varphi_t, \\
  \end{aligned}
\end{equation}
where we assume all boundary terms vanish due to the given boundary condition. Then the force balance equation (\ref{FB}) leads to an Allen-Cahn type equation
\begin{equation}
  \varphi_t = \nabla \cdot \frac{\pp W}{\pp \nabla \varphi} - \frac{\pp W}{\pp \varphi}.
\end{equation}
A stationary solution of the Allen-Cahn type equation satisfies the Euler-Lagrangian equation of the functional (\ref{mix_energy}), i.e.,
\begin{equation}\label{EL_Sol}
\frac{\delta W}{\delta \varphi} - \nabla \cdot \left(\frac{\delta W}{\delta \nabla \varphi} \right) = 0.
\end{equation}

The above derivation performs an energetic variational approach in terms of $\varphi$ and $\varphi_t$. We call this as an {\bf Eulerian approach}, in which $\varphi$ can be viewed as a generalized coordinate of the system \cite{doi2011onsager}. There is an alternative way to derive a dynamic of the system, known as the {\bf Lagrangian approach} \cite{Giga2017}.
Instead of studying the evolution of phase function $\varphi(\x, t)$ directly, the Lagrangian approach study the evolution of a \emph{Lagrangian map}, or \emph{flow map}, $\x(\X, t)$  for given initial condition $\varphi_0(\X)$. For fixed $t$, $\x^t(\X) = \x(\X, t)$ is a diffeomorphism between the initial domain $\Omega^0$ and the current domain $\Omega^t$, known as a \emph{deformation map} \cite{temam2005mathematical, gonzalez2008first}. For fixed $\X$, $\x(\X, t)$ is the trajectory of the particle labeled by $\X$. We can view $\X \in \Omega^0$ are Lagrangian coordinates and $\x \in \Omega^t$ are Eulerian coordinates.

% the evolution of the system is characterized by a
%a Lagrangian description for a gradient flow system trace the evolution of the
%An alternative dynamic can be obtained by considering the energetic variation with respect the=
For a given flow map $\x(\X, t)$, we can define the virtual velocity in Eulerian coordinate, $\uvec(\x(\X, t), t)$ as
\begin{equation}
\uvec(\x(\X, t), t) = \x_t(\X, t).
\end{equation}
Another important quantity associated with $\x(\X, t)$ is the deformation tensor $F(\X, t)$, defined by 
 \begin{equation}
   F(\X, t) = \nabla_{\X} \x(\X, t),
  %  F_{ij}(X) = \frac{\pp x_i}{\pp X_j}, \quad F^{-1}_{ji} = \frac{\pp X_j}{\pp x_i}, \quad (F^{-1}_{ij} = \frac{\pp X_i}{\pp x_j}, F^{-T}_{ij} = \frac{\pp X_j}{\pp x_i})
 \end{equation} 
which carries all the information about how the physical quantity $\varphi$ transport with the flow. 
Since $\x(\X, t)$ is a one-to-one map between $\Omega^0$ and $\Omega^t$ for fixed $t$, we can enforce $\det F(\X, t) > 0$, which means the map $\x(\X, t)$ is orientation-preserving for $\forall t$ \cite{gonzalez2008first}.

% which is a diffeomorphism from a reference domain $\Omega_0$ to a physical domain $\Omega_t$.  

In order to get the equation of $\x(\X, t)$, we shall impose the kinematic relation to the physical quantity $\varphi$. Then the dynamics of $\varphi(\x(\X, t), t)$ will be totally determined by the dynamics of the flow map $\x(\X, t)$.
% that the physical quantity $\varphi$ satisfies the kinematic equation
For Allen-Cahn type models, it is often assumed that $\varphi$ satisfies 
\begin{equation}\label{Transport}
\varphi(\x(\X, t), t) = \varphi_0(\X),
\end{equation}
where $\varphi_0(\X)$ is the initial condition. % (\ref{Transport}) is equivalent to (\ref{Transport1}) in Eulerian coordinates.
One can view (\ref{Transport}) as a composition between $\varphi_0$ and inverse flow map $\X^{-1}(\x, t)$ at time $t$, that is
\begin{equation}\label{composition}
  \varphi(\x, t) = \varphi_0 \circ \X^{-1}(\x, t).
\end{equation}
From the kinematic equation (\ref{Transport}), we have
\begin{equation}
0 = \frac{\dd}{\dd t} \varphi(\x(\X, t), t) = \pp_t \varphi + \nabla \varphi \cdot \uvec.
\end{equation}
Hence, $\varphi(\x, t)$ satisfies scalar transport equation
\begin{equation}\label{ST}
\pp_t \varphi + \uvec \cdot \nabla \varphi = 0,
\end{equation}
in Eulerian coordinates.
% For, the mixture energy (\ref{mix}) can be formulated as a functional of the flow map $\x(\X, t)$

\begin{remark}
  The above transport relation (\ref{ST}) is the macroscopic transport on the microscopic variable $\varphi$, which might only be valid locally. % i.e., at a small time scale.
  The complicated phase evolution, such as interface merging or pinching off, which is a consequence of microscopic evolution of $\varphi$, cannot be described by this kinematic.  
   %which corresponds to microscopic evolution of the phase variable $\varphi$
\end{remark} 

Within the kinematic (\ref{Transport}), $\varphi(\x)$ is determined by $\x(\X, t)$ for given $\varphi_0(\X)$. Hence, we can propose a energy-dissipation law in terms of $\x(\X, t)$ and $\x_t(\X, t)$ to characterize the dynamics of the flow map, that is
% The equation of flow map $\x(\X, t)$ can be obtained by imposing suitable dissiaption on $\x(\X, t)$, that is
\begin{equation}\label{ED_Lag}
\frac{\dd}{\dd t} \mathcal{F} [\x] = - 2 \mathcal{D}[\x, \x_t],
\end{equation}
where
\begin{equation}
\mathcal{F}[\x] = \int_{\Omega_0} W(\varphi_0, F^{-\rm T} \nabla_{\X} \varphi_0) \det F \dd \X,
\end{equation}
and $2 \mathcal{D}[\x, \x_t] \geq 0$ is the rate of energy dissipation. One can view the free energy as a function of $\X$, $\x(\X, t)$ and $F$, denoted by
\begin{equation}\label{Energy_F}
  \mathcal{F}[\x]  = \int_{\Omega_0} \mathcal{W}(\X, F) \dd \X.
\end{equation}

%on $\x_t(\X, t)$, i.e., velocity $\uvec(\x(\X, t), t)$. %Unlike the generalized diffusions, there is no natrual choice for the dissipation functional.
The energy-dissipation law (\ref{ED_Lag}) can be viewed as a generalized gradient flow of the flow map $\x(\X, t)$.
Since we are only concerned with equilibria of the system, the choice of dissipation only effects the dynamics approaching to equilibria. We'll discuss this later.

The evolution equation of the flow map $\x(\X, t)$ can be derived by employing an energetic variational approach, that is
\begin{equation}\label{Eq_flow}
\frac{\delta \mathcal{D}}{\delta \x_t} = - \frac{\delta \mathcal{F}}{\delta \x},
\end{equation}
where [See Appendix. A for the detailed computation]
\begin{equation}\label{F_x}
\frac{\delta \mathcal{F}}{\delta \x} = \nabla \cdot \left( \frac{\pp W}{\pp \nabla \varphi} \otimes  \nabla \varphi  -  W(\varphi, \nabla \varphi) \mathrm{I} \right).
\end{equation}
A stationary solution in the Lagrangian approach satisfies
\begin{equation}\label{sol_Lag}
 \nabla \cdot \left( \frac{\pp W}{\pp \nabla \varphi} \otimes  \nabla \varphi  -  W(\varphi, \nabla \varphi) \mathrm{I} \right)  = 0.
\end{equation}

\begin{remark}
  The Lagrangian approach minimizes the free energy functional in the admissible set
  \begin{equation}
    \mathcal{Q} = \{ \varphi(\x) ~|~ \varphi(\x) = \varphi_0 \circ \X^{-1}(\x), ~ \X^{-1}: \Omega \rightarrow \Omega^0  \},
  \end{equation}
  which is different with that in the Eulerian approach. So it is subtle to choose a suitable $\varphi_0$ to get a desired equilibrium. For the classical Allen-Cahn equation, since $|\varphi| \leq 1$, it is not difficult to choose a proper $\varphi_0$ . In general, $\varphi_0$ can be obtained by some Eulerian approach. We can also update $\varphi_0$ during the evolution of the flow map. 
  % Hence, the Lagrangian methods may result in different equilibria due to the choice of $\varphi_0(\X)$.
  % For phase-field equation, we 
\end{remark}

\begin{remark}
  If $\varphi$ is a conserved quantity that satisfies
  \begin{equation}
   \frac{\dd}{\dd t} \int_{\Omega} \varphi(\x, t) \dd \x = 0,
  \end{equation}
  then the kinematic equation is given by
  \begin{equation}\label{kinemtic_diffusion}
    \varphi(\x(\X, t), t) = \frac{\varphi_0(\X)}{\det F}.
  \end{equation}
  This is the kinematic for the Cahn-Hilliard type equation, which can be viewed as a generalized diffusion with the energy-dissipation law given by \cite{liu2019energetic}
  \begin{equation}
    \frac{\dd}{\dd t} \int_{\Omega} W(\varphi, \nabla \varphi) \dd \x = - \int_{\Omega} \varphi^2 |\uvec|^2 \dd \x.
  \end{equation}
   Both Allen-Cahn and Cahn-Hilliard equations types are driven by the same mixture energy (\ref{mix_energy}), but the kinematic and dissipation mechanisms are different.
%  In classical phase-field models, the Allen-Cahn equation satisfies the kinematic \ref{Transport}, while the Cahn-Hilliard equation satisfies the kinemtic \ref{kinemtic_diffusion}.
\end{remark}

 Although the equations for the stationary solutions obtained by the Eulerian approach (variation on the phase variable $\varphi(\x, t)$) and the Lagrangian approach (variation on the flow maps) look different [(\ref{EL_Sol}) and (\ref{sol_Lag})], formally one can easily show that \cite{liu2003phase}: 
\begin{theorem}\label{Theorem1}
  For a given energy functional (\ref{mix_energy}), all smooth (regular enough) solutions of the Euler-Lagrangian equation:
  \begin{equation}\label{eq_EL}
    - \nabla \cdot \left( \frac{\pp W}{\pp \nabla \varphi}  \right) + \frac{\pp W}{\pp \varphi} = 0
  \end{equation}
  also satisfy the equation
  \begin{equation}\label{eq_Lg}
    \nabla \cdot \left( \frac{\pp W}{\pp \nabla \varphi} \otimes \nabla \varphi - W \mathrm{I} \right) = 0.
  \end{equation}
\end{theorem}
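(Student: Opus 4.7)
The plan is to verify the implication by direct computation in index notation, exploiting the symmetry of second derivatives. The key observation is that the divergence of the Ericksen-type stress tensor on the left side of (\ref{eq_Lg}) factors as $\nabla\varphi$ times the Euler--Lagrange operator in (\ref{eq_EL}), so that (\ref{eq_EL}) trivially implies (\ref{eq_Lg}).

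First I would introduce the abbreviation $p_i = \partial W/\partial(\partial_i\varphi)$ and denote the stress tensor by $\sigma_{ij} = p_i\,\partial_j\varphi - W\,\delta_{ij}$, so that (\ref{eq_Lg}) reads $\partial_i\sigma_{ij}=0$. Expanding with the product rule gives
\begin{equation*}
\partial_i \sigma_{ij} = (\partial_i p_i)\,\partial_j\varphi + p_i\,\partial_i\partial_j\varphi - \partial_j W.
\end{equation*}
Next I would use the chain rule on $W=W(\varphi,\nabla\varphi)$ to compute
\begin{equation*}
\partial_j W = \frac{\partial W}{\partial \varphi}\,\partial_j\varphi + p_k\,\partial_j\partial_k\varphi.
\end{equation*}

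Substituting this in and invoking the symmetry $\partial_i\partial_j\varphi = \partial_j\partial_i\varphi$ (valid for smooth $\varphi$) causes the two terms quadratic in second derivatives, $p_i\,\partial_i\partial_j\varphi$ and $p_k\,\partial_j\partial_k\varphi$, to cancel after relabeling the dummy index. What remains is
\begin{equation*}
\partial_i\sigma_{ij} = \Bigl(\partial_i p_i - \frac{\partial W}{\partial\varphi}\Bigr)\,\partial_j\varphi = \Bigl(\nabla\cdot\frac{\partial W}{\partial\nabla\varphi} - \frac{\partial W}{\partial \varphi}\Bigr)\,\partial_j\varphi.
\end{equation*}
The factor in parentheses vanishes precisely because $\varphi$ solves (\ref{eq_EL}), which yields (\ref{eq_Lg}) componentwise and completes the proof.

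There is no substantive obstacle here; the only care needed is bookkeeping of indices and the tensor-product convention used in defining $\frac{\partial W}{\partial\nabla\varphi}\otimes\nabla\varphi$, together with the smoothness hypothesis that justifies commuting partial derivatives. It is worth remarking that the converse implication is false in general—solutions of (\ref{eq_Lg}) need only satisfy $(\nabla\cdot\frac{\partial W}{\partial\nabla\varphi} - \frac{\partial W}{\partial\varphi})\,\nabla\varphi = 0$, which allows the Euler--Lagrange residual to be nonzero wherever $\nabla\varphi$ vanishes—so Theorem \ref{Theorem1} is a one-directional statement, consistent with the fact that the Lagrangian approach restricts to a strictly smaller admissible set $\mathcal{Q}$ determined by the initial profile $\varphi_0$.
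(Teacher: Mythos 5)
Your computation is correct and is exactly the standard argument the paper leaves implicit (it only asserts the result "formally one can easily show" with a citation to the literature): expand $\partial_i\bigl(p_i\,\partial_j\varphi - W\delta_{ij}\bigr)$, apply the chain rule to $\partial_j W$, cancel the second-derivative terms by symmetry, and factor out $\nabla\varphi$ times the Euler--Lagrange residual. Your closing remark that the identity reads $\partial_i\sigma_{ij} = \bigl(\nabla\cdot\frac{\pp W}{\pp\nabla\varphi}-\frac{\pp W}{\pp\varphi}\bigr)\partial_j\varphi$, so the converse fails where $\nabla\varphi=0$ and only holds for smooth solutions, matches the paper's own discussion of stationary weak solutions and of the restricted admissible set $\mathcal{Q}$; the one point worth stating explicitly is that the divergence in (\ref{eq_Lg}) must be taken over the first index of $\frac{\pp W}{\pp\nabla\varphi}\otimes\nabla\varphi$ (the convention consistent with the paper's Appendix A), since contracting the other index does not produce the cancellation.
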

This result indicates connection between variation with respect to $\varphi$ and the variation with respect to flow map through Legendre transform \cite{liu2003phase}. In general, the weak solution of the Euler-Lagrange equation (\ref{eq_EL}) may not satisfy (\ref{eq_Lg}). In the theory of harmonic map, a weak solution of (\ref{eq_EL}) that also satisfies the weak form of (\ref{eq_Lg}) is known as a \emph{stationary weak solution} \cite{liu2003phase, schoen1982regularity, ball2017mathematics}. From a numerical perspective, this theorem indicates that all equilibria in the Eulerian approach can be obtained from the Lagrangian approach with a proper choice $\varphi_0(\X)$.  However, for a given $\varphi_0(\X)$, the Lagrangian calculation may not end up with the same equilibrium of the Eulerian approaches.

\subsection{Dissipation functional}
In this subsection, we discuss the choice of dissipation functional in Lagrangian approaches to phase-field models. Different choices of dissipation provide different dynamics approaching equilibria of the system. Since we may have multiple equilibria for the free energy like (\ref{mix_energy}) \cite{Yin2020prl}, different dynamics may end up with different equilibria for given $\varphi_0(\X)$. 

By using the kinematic relation (\ref{composition}) and (\ref{ST}), the dissipation for the gradient flow  (\ref{ED_GD}) can be reformulated in terms of $\x(\X, t)$ and $\x_t(\X, t)$, that is
\begin{equation}\label{ED_L1}
% \frac{\dd}{\dd t} \mathcal{F}[\x] = - \int_{\Omega} |\nabla \phi \cdot \uvec|^2 \dd \x
\mathcal{D}[\x, \x_t] = - \frac{1}{2} \int |\nabla \varphi \cdot \uvec|^2 \dd \x
\end{equation}
% where
%\begin{equation}
%\mathcal{F}[\x] =  \int_{\Omega_0} W(F^{-\rm T} \nabla_{\X} \varphi_0, \varphi_0)   \det F \dd \X 
% \end{equation}
for  given initial condition $\varphi_0(\X)$. The equation of the flow map $\x(\X, t)$ can be obtained via a standard energetic variational approach {\color{blue} (\ref{Eq_flow})}, which is
\begin{equation}\label{FD_1}
(\nabla \varphi \otimes \nabla \varphi) \uvec =  - \nabla \cdot \left( \frac{\pp W}{\pp \nabla \varphi} \otimes  \nabla \varphi  - W(\varphi, \nabla \varphi) \mathrm{I}   \right).
\end{equation}
Here the right hand side is obtained by the LAP, which corresponds to $\frac{\delta \mathcal{A}}{\delta \x}$ [see (\ref{F_x})], while the left hand side is obtained by MDP, i.e. $\frac{\delta \mathcal{D}}{\delta \uvec} = (\nabla \varphi \otimes \nabla \varphi) \uvec$.
In a recent work \cite{cheng2019new}, the authors study numerical methods for equation (\ref{FD_1}) in one-dimension by discretizing $\x(\X, t)$ directly. Their results show that the dynamics (\ref{FD_1}) can capture the thin diffuse interfaces of Allen-Cahn type equations with a small number of mesh points in 1D. However, % it might be difficult to extend their approach to high dimensional problem.
the energy-dissipation law (\ref{ED_L1}) may not be suitable for Lagrangian calculations, especially for high dimensions $d \geq 2$. Indeed, since $\nabla \varphi \otimes \nabla \varphi$ is a rank one matrix, $\nabla \varphi \otimes \nabla \varphi$ is not a invertible matrix for $d \geq 2$, so $\uvec$ is not well-defined everywhere. Moreover, even for one-dimensional cases,  $\nabla \varphi \otimes \nabla \varphi$ is almost zero in non-interfacial regions, which restricts the choice of $\varphi_0$.

% As mentioned previously, since we are only concerned with equilibria of the system, it is not neccessary to take the dissiaption functional in Lagrangian approaches to be consist with that in Eulerian approaches. 
% Now we discuss the choice of dissipation, which determine the dynamics of the system.
\iffalse
We can impose artificial dissipation as
\begin{equation}
\mathcal{D} = \frac{1}{2} \int_{\Omega} \gamma(\x) |\x_t|^2 \dd \x,
\end{equation}
where $\gamma(\x)$ is the dissipation rate.
\fi

% From a computational perspective, the energy-dissipation law (\ref{ED_L1}) is not a good choice for Lagrangian-based numerical methods. Since for phase-field models,  and the mesh points don't have a well-defined velocity in these non-interfacial regions.
The degeneracy of the equation (\ref{FD_1})  motivates us to consider a different dissipation functional by adding a new term, that is 
\begin{equation}\label{New_dissipation}
\mathcal{D} = \frac{1}{2} \int_{\Omega} |\nabla \varphi \cdot \uvec|^2 +  \nu | \nabla \uvec|^2 \dd \x,
\end{equation}
where $\nu$ is a constant.
% where $M(\x, \varphi)$ can depend on $\x(\X, t)$ and $\varphi$.
% Since we only change the dissipation functional in energy-dissipation, only the dynamics of the system when approaching equilibrium states are changed.
By a direct computation, for such an energy-dissipation law, the dynamics of the system is given by
\begin{equation}\label{eq_1}
 % \begin{cases}
    -  \nu \Delta \uvec + \left(\nabla \varphi \otimes \nabla \varphi \right) \uvec = - \nabla \cdot \left( \frac{\pp W}{\pp \nabla \varphi} \otimes  \nabla \varphi  -  W(\varphi, \nabla \varphi) \mathrm{I} \right), \\
    % & \varphi_t + \uvec \cdot \nabla \varphi = 0. \\
  % \end{cases}
\end{equation}
which gives us the equation of the flow map $\x(\X, t)$ in Lagrangian coordinates. The energy-dissipation law (\ref{New_dissipation}) fixes the degeneracy of $\nabla \varphi \otimes \nabla \varphi$. Moreover, from a computational perspective, 
$\nu |\nabla \uvec|^2$ can be viewed as a regularization term to the flow map $\x(\X, t)$, which controls the quality of mesh generated by the flow map.

In Lagrangian coordinates, (\ref{New_dissipation}) can be written as
% When $\nu = 0$, (\ref{eq_1}) becomes eq. (\ref{FD_1}).
% Thourgh this paper, we take $M(\x) = F^{\rm{T}}$ and $\nu$ to be a constant, mainly for the simiplicity in the Lagrangian formula, then the dissipation can be written as
\begin{equation}\label{New_dissipation_1}
  \mathcal{D}[\x, \x_t]  = \frac{1}{2} \int_{\Omega^0}  \Big| (F^{-\rm T} \nabla_{\X} \varphi_0) \cdot \x_t \Big|^2   +  \nu  |\nabla_{\X} \x_t F^{-1}|^2 \det F \dd \X.
\end{equation}
In order to simplify the numerical implementation, we replace $ |\nabla_{\X} \x_t F^{-1}|^2$ by $|\nabla_{\X} \x_t|$ in the following. Then the equation for flow map $\x(\X, t)$ is given by (recall $\frac{\pp W}{\pp \nabla \varphi} = \nabla \varphi$ due to (\ref{Def_W_phi}))
\begin{equation}\label{eq_Lag_X}
\begin{aligned}
  & - \nu \Delta_{\X} \uvec + \left( (F^{-\rm T} \nabla_{\X} \varphi_0) \otimes (F^{-\rm T} \nabla_{\X} \varphi_0) \right) \uvec \\
  & \quad = - \nabla_{\X} \left( (F^{-\rm T} \nabla_{\X} \varphi_0) \otimes (F^{-\rm T} \nabla_{\X} \varphi_0) - W(\varphi_0, F^{-\rm T} \nabla_{\X} \varphi_0) {\rm I} \right) : F^{-1}, \\
\end{aligned}
\end{equation}
subject to the initial condition $\x(\X, 0) = \X$ and a suitable boundary condition, where $A : B = \sum_{j, k = 1}^n A_{ijk} B_{jk}$ for $A \in \mathbb{R}^{n \times n \times n }$ and $B \in \mathbb{R}^{n \times n}$.

%It should remark this is just a numerical treatment since $|\nabla_{\X} \uvec|^2$.
\begin{remark}
  It is worth mentioning that the additional terms in both (\ref{New_dissipation}) and (\ref{New_dissipation_1}) are not physically acceptable viscosity for compressible fluids, and we add them into the dissipation functional only for the numerical purpose. 
  More specifically, let
  \begin{equation}
    \x^{*}(\X, t) = R(t) \x(\X, t),
  \end{equation}
  then according to the frame-indifference, we should have
  \begin{equation}
    \mathcal{D}(\x, \x_t) =  \mathcal{D}(\x^{*}, \x_t^{*}).
  \end{equation}
  Note
  \begin{equation*}
    % \uvec^{*}(R \x, t) = \dot{R}(t) \x(\X, t) + R(t) \uvec(R \x, t), \quad
    \nabla_{\x^{*}} \uvec^{*} = \dot{R} R^{-1} + R \nabla_{\x} \uvec R^{-1}, \quad \nabla_{\X} \uvec^{*} = \dot{R} F + R \nabla_{\X} \uvec,
  \end{equation*}
  hence, it is easy to show that the additional terms in both (\ref{New_dissipation}) and (\ref{New_dissipation_1})  conflict with the frame-indifference. 
  % do not satisfy the frame-indifference.
  For compressible flow, a physically acceptable viscosity in the dissipation is often taken as
  \begin{equation}
    \mathcal{D} = \int \nu \Big| \frac{1}{2} (\nabla \uvec + \left( \nabla \uvec)^{\rm T}) \Big|^2 + (\zeta - \frac{2}{3} \nu \right) |\nabla \cdot \uvec|^2 \dd \x,
  \end{equation}
  where $\nu > 0$ and $\eta > 0$. We refer the reader to \cite{antman1998physically, dafermos2005hyperbolic} for more detailed discussions. % Since we are only concerned with equilibria of Allen-Cahn type phase-field model, 
\end{remark}

At the end of this section, We should emphasize that the above derivation is rather formal, in which we assume that the flow map exists at least locally. The goal of this paper is designing some Lagrangian schemes that preserve the above variational structures in a discrete level.
More analysis are certainly need to show the existence of flow map. We refer the interested reader to \cite{evans2005diffeomorphisms,dacorogna1981relaxation} for some theoretical results on some related but different systems. 
% In general, we can take both $\nu$ and $M$ as a function of $\x$ and $\varphi$, which might provide more robust control of mesh quality and improve the numerical methods.

 % We can borrow the idea in moving mesh methods to determine the $M(\x)$. We'll explore this in the furtue study, here we focus on the cases with $\nu$ to be a constant.

\iffalse
\subsection{Related to moving mesh methods}

In the literature of moving mesh, a large class of methods are known as velocity-based adaptive methods, or Lagrangian-like moving mesh methods. In most of velocity-based moving mesh methods, the solution $\varphi(\x, t)$ are approximated by
\begin{equation}
\varphi(\x, t) = \varphi(\x(\xi, t), t)
\end{equation}
where $\xi$ are coordinates in the reference domain. In the moving methods, both $\x(\xi, t)$ and $\varphi$ are computed. There are close relationships between our Lagrangian method with the moving mesh methods,
\fi

% It has been well known that Lagrangian methods has advantage in maintaining 
%   Unfortunately, the Lagrangian-like moving mesh methods also inherit this disadvantage of Lagrangian methods, and major effort has gone into the development of these methods so as to avoid mesh tangling and/or regain spatial accuracy} \cite{huang2010adaptive}

%\subsubsection{Eulerian Description}
%vWithin an Eulerian description, 
% \subsubsection{Lagrangian Description}

% In the following, we use Eulerian approach for the variation with respect to the function, and Lagrangian approach for the variation with respect to the domain.

\section{Numerical Scheme}\label{sec:scheme}

In this section, we construct our variational Lagrangian scheme for the phase-field model with the energy-dissipation law (\ref{ED1}) by a discrete energetic variational approach \cite{liu2019lagrangian}. Instead of considering a particular weak form of the flow map equation (\ref{eq_Lag_X}), a discrete energetic variational approach, %which is analogous to energetic variational approaches in a semidiscrete level,
which performs an energetic variational approach in a semi-discrete level, derives a ``semi-discrete equation'' that preserves the variational structure from a discrete energy-dissipation law directly. By introducing a proper temporal discretization to the ``semi-discrete equation'', we can construct an energy stable Lagrangian scheme to our phase-field model.

\subsection{A discrete energetic variational approach}
% A discrete energetic variational approach, , starts with
In general, for a system without kinetic energy, a discrete energy-dissipation law can be written as
\begin{equation}\label{d_EL}
\frac{\dd}{\dd t} \mathcal{F}_h(\bm{\Xi}(t)) = - 2 \mathcal{D}_h(\bm{\Xi}(t), \bm{\Xi}'(t)),
\end{equation}
where $\bm{\Xi}(t) \in \mathbb{R}^{K}$ is the ``discrete'' state variable, $\mathcal{F}_h(\bm{\Xi}(t))$ is the discrete free energy and $ 2 \mathcal{D}_h(\bm{\Xi}(t), \bm{\Xi}'(t))$ is the discrete dissipation.
One can obtain a discrete energy-dissipation law (\ref{d_EL}) from the continuous energy-dissipation law by either discretizing the physical quantity $\varphi(\x, t)$ (Eulerian approach) or the flow map $\x(\X, t)$ (Lagrangian approaches) in space. % Here we consider the cases in which the total energy only consists of the free energy part in the continous level.

Similar to an energetic variational approach in a continuous level, the governing equation of $\bm{\Xi}(t)$, a system of nonlinear ODEs, can be obtained from the force balance equation
\begin{equation}\label{discret_D_1}
  \frac{\delta  \mathcal{D}_h}{\delta \bm{\Xi}'}(\bm{\Xi}(t), \bm{\Xi}'(t)) = - \frac{\delta \mathcal{F}_h}{\delta \bm{\Xi}} (\bm{\Xi}(t)),
\end{equation}
where the right-hand side comes by performing LAP, taking variation of the discrete action functional
$\mathcal{A}_h(\bm{\Xi}(t)) = \int_{0}^T - \mathcal{F}_h(\bm{\Xi}(t)) \dd t$ with respect to $\bm{\Xi}(t)$, while the left-hand side comes by performing MDP, taking variation of the discrete dissipation functional
$\mathcal{D}_h (\bm{\Xi}(t), \bm{\Xi}^{\prime}(t))$ with respect to $\bm{\Xi}^{\prime}(t)$.

A discrete energetic variational approach follows the strategy of \emph{``discrete-then-variation''}, which has been a powerful tool to construct numerical schemes for complicated systems with variational structures \cite{furihata2010discrete, christiansen2011topics, carrillo2009numerical, carrillo2016numerical, xu2016variational, liu2019lagrangian}. Compared with the traditional \emph{``variation-then-discrete''} approach, the ``semi-discrete'' equation obtained by a discrete energetic variational approach can automatically inherit the variational structure from the continuous level.
One may obtain the same ``semi-discrete'' equation through a ``variation-then-discrete'' approach by choosing a particular weak form for the PDE.

% which is a system of $K$ nonlinear ordinary differential equations, i.e., ``semi-discrete equations'', of $\bm{\Xi}(t)$. Due to the assumption that the dissipation is quadratic in ``rate'' $\bm{\Xi}'(t)$, equations (\ref{discret_D_1}) can be written as
% \begin{equation}\label{discret_D_2}
%   {\sf D} \left(\bm{\Xi}(t) \right)  \bm{\Xi}'(t)  = \frac{\delta \mathcal{A}_h}{\delta \bm{\Xi}} \left(\bm{\Xi}(t)\right),
% \end{equation}
% where $ {\sf D} \left( \bm{\Xi}(t)\right)$ is a $K \times K$ matrix. A numerical scheme can be obtained by introducing a proper temporal discretization to (\ref{discret_D_2}).
% If ${\sf D} \left( \bm{\Xi}(t) \right)$ is an identical matrix (which is not true in general),
% (\ref{discret_D_2}) can be viewed as a fast descent on each component $\Xi_k$.

% We shall now derive the ``semi-discrete'' equation of our phase-field model.
% As a Lagrangian scheme,
For our phase-field model, in order to get a discrete energy-dissipation law, we first introduce a piecewise linear approximation to the flow map $\x(\X, t)$, % As in \cite{liu2019lagrangian}, we can approximate the flow map $\x(\X, t)$ by a piecewise linear map.
% A piecewise linear approximation to the flow map $\x(\X, t)$ 
which can be constructed by a standard finite element method.
In the following, we only discuss the two-dimensional case, the procedure can be easily extended to other spatial dimensions. Let $\mathcal{T}_h$ be a triangulation of domain $\Omega^0$, consists of a set of simplexes $\{ \tau_e ~|~ e = 1,\ldots M \}$ and a set of nodal points $\mathcal{N}_h = \{\X_1, \X_2, \ldots, \X_N \}$. Then the approximated flow map is given by
\begin{equation}\label{x_h}
\x_h(\X, t) = \sum_{i = 1}^{N} \avec_i(t) \psi_i(\X) ~\in~ V_h,
\end{equation}
where
\begin{equation*}
V_h = \{  v \in C(\Omega) ~|~ v~\text{is linear on each element}~\tau_e \in \mathcal{T}_h  \},
\end{equation*}
and $\psi_i(\X) : \mathbb{R}^2 \rightarrow \mathbb{R}$ is the hat function satisfying $\psi_i(\X_j) = \delta_{ij}$.
Since $\x_h(\X_i, t) =  \avec_i(t)$, $\avec_i(t) = (\xi_{i, x}, \xi_{i, y}) \in \mathbb{R}^2$ can be viewed as the coordinate of $i$-th mesh point at $\Omega^t$, and $\avec'_i(t)$ defines the velocity of $i$-the mesh point. Within the above spatial discretization, the discrete state variable of the system $\bm{\Xi}(t)$ is defined by
\begin{equation}
\bm{\Xi}(t) = \left( \xi_{1, x}, \xi_{2, x}, \ldots, \xi_{N,x}, \xi_{1, y}, \xi_{2, y}, \ldots, \xi_{N, y} \right) \in \mathbb{R}^{K},
\end{equation}
where $K = 2N$. For simplicity's sake, we consider the natural boundary condition for the flow map through this section. For the Dirichlet boundary condition that considered in the next section, if $\varphi_0$ is chosen to satisfy the Dirichlet boundary condition, we can set $x(\X, t) = \X$ for $\X \in \pp \Omega_0$, i.e., the velocity of the mesh points on the boundary to be zero, such that the Dirichlet boundary condition is satisfied for $\varphi(\x, t)$.  

% Some specific boundary conditions can be incorporated to numerical schemes by modifying velocity of the mesh points on the boundary. For example, }

The framework of finite element discretization enables us to compute the deformation matrix $F$ explicitly on each element [see the Appendix in \cite{liu2019lagrangian} for the explicit form].  % and consider more general computational domain.
%  denote
We denote the deformation matrix $F_h(\x_h(\X, t), t)$ on each element $\tau_e$, which is a constant matrix for fixed $t$, by $F_e(\bm{\Xi}(t)) = \nabla_{\X} |_{\X \in \tau_e} \x_h$. The admissible set of $\bm{\Xi}(t)$ is defined by
\begin{equation}
\mathcal{S}_{ad}^h = \left\{ \bm{\Xi}(t) \in \mathbb{R}^{K} ~|~ \det F_e(\bm{\Xi}(t)) > 0,~\forall e    \right\}.
\end{equation}
It can be noticed that $\mathcal{S}_{ad}^h$ is not a convex set, which imposes difficulties in both simulations and numerical analysis.

Inserting (\ref{x_h}) into the original energy-dissipation law, we can obtain the discrete free energy
\begin{equation}\label{discret_E_HD}
\begin{aligned}
  \mathcal{F}_h(\bm{\Xi}(t)) % & = \int_{\Omega_0} W \left( \varphi_0(X), F^{-\rm{T}} \nabla_{X} \varphi_0 \right) \det F \dd X \\
  % & = \sum_{i = 0}^N \sum_{k \in G(i)} \int_{\tau_k^{(i)}} W \left( \varphi_0(X),  (F)^{-\rm{T}} \pp_{X} \varphi_0 \right) \det F \dd X \\
 % & = \sum_{i = 0}^N \sum_{k \in G(i)} \int_{\tau_k^{(i)}} W \left( \varphi_0(X),  (a_i A_k^{i} + a_p A_k^{p} + a_q A_k^{q})^{-\rm{T}} \pp_{X} \varphi_0 \right) \det (a_i A_k^{i} + a_p A_k^{p} + a_q A_k^{q}) \dd X \\
  = \sum_{e = 1}^{M} \int_{\tau_e}  W \left( \varphi_0(X), F_e^{-\rm{T}} \nabla_{X} \varphi_0 \right) \det F_e \dd X,
\end{aligned}
\end{equation}
and the discrete dissipation functional
\begin{equation}
  \begin{aligned}
    \mathcal{D} (\bm{\Xi}(t), \bm{\Xi}'(t)) &  =  \frac{1}{2}  \sum_{e = 1}^{M} \int_{\tau_e}  \left( |(F^{-\rm T}_e \nabla_{\X} \varphi_0) \cdot \uvec_h|^2 + \nu | \nabla_{\X} \uvec_h|^2 \right) \det F_e \dd \X,  \\
  \end{aligned}
\end{equation}
where
\begin{equation}
\uvec_h(\X, t) = \sum_{j=1}^M \bm{\xi}_j'(t) \psi_j(\bm{X}).
\end{equation}
% Since $\x_h(\X_i, t) = \avec_i$, one can view $\avec_i(t)$ as coordinates in $\Omega_t$ and $\x_h(\X_i, t)$ as a trajectory of a ``particle'' $\X_i$. The deformation matrix $F_h(x_h(\X, t), t) = \nabla_{\X} \x_h$ becomes a $d \times d$-matrix-valued function of $\avec_i(t)$. 
Then by a discrete energetic variation approach, we can derive a system of ordinary differential equations of $\bm{\Xi}(t)$, that is
\begin{equation}\label{Semi_eq_1}
  {\sf D}_h(\bm{\Xi}(t)) \bm{\Xi}'(t) = - \frac{\delta \mathcal{F}_h}{\delta \bm{\Xi}} (\bm{\Xi} (t)).
  % D_{ij}(\bm{\Xi}) \Xi_j'(t) = - \frac{\delta \mathcal{F}}{\delta \Xi_i},
\end{equation}
We refer readers to the Appendix for the detailed computation of ${\sf D}(\bm{\Xi}(t))$ and the $\frac{\delta \mathcal{F}_h}{\delta \bm{\Xi}}$. Although the explicit forms of both ${\sf D}(\bm{\Xi}(t))$ and $\frac{\delta \mathcal{F}}{\delta \bm{\Xi}}$ may not be available in a general mesh, both of them are easy to obtain during the numerical implementation by summing the results on each element over the mesh.
% where
% \begin{equation}
% D_{ij}(\bm{\Xi})
% \end{equation}
It can be noticed that ${\sf D}(\bm{\Xi}(t)) \in \mathbb{R}^{K \times K}$ given by
\begin{equation}
{\sf D}(\bm{\Xi}(t)) = {\sf M}(\bm{\Xi}(t)) + \nu {\sf K}(\bm{\Xi(t)})
\end{equation}
with
\begin{equation*}
  {\sf M} =
  \begin{pmatrix}
    {\sf M}_{xx} &  {\sf M}_{xy} \\
    {\sf M}_{yx} &  {\sf M}_{yy} \\
  \end{pmatrix},
  \quad
    {\sf K} =
  \begin{pmatrix}
    {\sf K_0} &  {\sf 0} \\
     {\sf 0} &  {\sf K_0} \\
  \end{pmatrix}.
\end{equation*}
Here ${\sf M}_{\alpha \beta}$ ($\alpha, \beta = x, y$) is the modified mass matrix defined by
\begin{equation}\label{def_M}
{\sf M_{\alpha \beta}}(i, j) = \sum_{e \in N(i)} \pp_{\alpha} \varphi(\x_e) \pp_{\beta} \varphi (\x_e) \det F_e  \int_{\tau_e}\psi_i \psi_j    \dd \X,
\end{equation}
where $\x_e$ is the centroid of $\x^t(\tau_e)$, and ${\sf K_0}$ is the modified stiff matrix defined by
\begin{equation}\label{def_K}
{\sf K_0}(i, j) = \sum_{e \in N(i)} \det F_e \int_{\tau_e} \nabla_{\X} \psi_i \cdot  \nabla_{\X} \psi_j   \dd \X.
\end{equation}
It is easy to show that $\det {\sf M}(\bm{\Xi}) = 0$, ${\sf M}$ is positive semi-definite and ${\sf K}(\bm{\Xi})$ is a positive-define matrix if $\bm{\Xi} \in \mathcal{S}_{ad}^h$ [see Appendix \ref{App_B}]. Hence, the presence of $\nu {\sf K}$ ensures that ${\sf D}$ is positive-definite.

% which is crucial for our
% Although we cannot write down the explict form of ${\sf D}(\bm{\Xi}(t))$ and $\frac{\delta \mathcal{F}}{\delta \bm{\Xi}}$ in general mesh, ${\sf D}(\bm{\Xi}(t))$ and  $\frac{\delta \mathcal{F}}{\delta \bm{\Xi}}$ can be computed explicitly in each element $\tau_e$ [See Appendix]. By summing the results on each element over the mesh, both ${\sf D}(\bm{\Xi}(t))$ and $\frac{\delta \mathcal{F}}{\delta \bm{\Xi}}$ are easy to obtain during the numerical implementation.

\subsection{Temporal discretization}
Now we discuss the temporal discretization. A numerical scheme can be obtained by introducing a suitable temporal discretization to the ``semi-discrete equation'' (\ref{Semi_eq_1}). An advantage of existing a variational structure in the semi-discrete level is that various of classical numerical schemes can be reformulated as optimization problems \cite{du2019phase, xu2019stability, matthes2019variational}. % By solving these optimization problems directly, we can guarantee our schemes are energy stable.
In the current study, we use implicit Euler for temporal discretization. It is not difficult to apply high-order temporal discretization, such as BDF2 or Crank-Nicolson \cite{du2019phase} to our system, which will be studied in the future work.

% Similar idea has been used in \cite{junge2017fully}.

For given $\bm{\Xi}^n \in \mathcal{S}_{ad}^h$, the implicit Euler scheme for (\ref{Semi_eq_1}) is given by 
\begin{equation}\label{Implicit}
{\sf D}^{n}_{*} \frac{\bm{\Xi}^{n+1} - \bm{\Xi}^n}{\tau} = - \frac{\delta \mathcal{F}_h}{\delta \bm{\Xi}}(\bm{\Xi}^{n+1}).
\end{equation}
where ${\sf D}^n_{*}$ is chosen to be independent with $\bm{\Xi}^{n+1}$, that is taking $\pp_{\alpha} \varphi$, $\pp_{\beta} \varphi$ and $\det F_e$ in (\ref{def_M}) and (\ref{def_K}) to be value at $n-$th step.

Although (\ref{Implicit}) is a system of highly nonlinear equations that is often difficult to solve, by virtue of the variational structures in the semi-discrete level, we can reformulate (\ref{Implicit}) into an optimization problem, given by
\begin{equation}\label{Min_Problem}
\bm{\Xi} = \mathrm{argmin}_{\bm{\Xi} \in \mathcal{S}_{ad}^h} J_n(\bm{\Xi}),
\end{equation}
where
\begin{equation}\label{define_Jn}
  J_n(\bm{\Xi}) = \frac{ \left( {\sf D}^{n}_{*}(\bm{\Xi} - \bm{\Xi}^{n}), (\bm{\Xi} - \bm{\Xi}^{n}) \right)}{2 \tau} + \mathcal{F}_h (\bm{\Xi}).
\end{equation}

There are various of advantages in solving optimization problem (\ref{Min_Problem}) instead of solving the original nonlinear system (\ref{Implicit}) directly. Since $J_n(\bm{\Xi})$ might not be a convex function, solving (\ref{Implicit}) with standard nonlinear solvers, such as fixed-point iterations or Newton-type methods, may only obtain a saddle point or a local minimizer of $J_n(\bm{\Xi})$, which may not decrease the discrete energy. Moreover, the standard nonlinear solver can not guarantee  that the obtained solution is in the admissible set $\mathcal{S}_{ad}^h$.
% for our problem, it is also difficult to guarantee the solution of (\ref{Implicit}) obtained by some nonlinear solver is in t
% $$\det F_e > 0$ in each element and the decay of the discrete energy. % As advantage of a discrete enegetic variational approach, we can always covert solving such nonlinear equation to be a optimization problem.
For the optimization problem (\ref{Min_Problem}), we can use some line-search based optimization method and manually set $$J_n(\bm{\Xi}) = \infty, \quad \bm{\Xi} \notin \mathcal{S}_{ad}^h.$$  Then the line search based method can guarantee  $\det F_e > 0$ as if $J_n(\bm{\Xi}^{n+1}) \leq J_n(\bm{\Xi}^n)$, even though the exact global minimizer of $J_n(\bm{\Xi})$ may not be found. 
Noticed that $J(\bm{\Xi}^{n+1}) \leq J(\bm{\Xi}^{n})$ indicates
\begin{equation}
  \frac{1}{2 \tau} {\sf D}^*_n (\bm{\Xi}^{n+1} - \bm{\Xi^{n}}) \cdot (\bm{\Xi}^{n+1} - \bm{\Xi^{n}}) + \mathcal{F}_h(\bm{\Xi}^{n+1}) \leq \mathcal{F}_h (\bm{\Xi}^{n}).
  \end{equation}
  Hence, our scheme is energy stable satisfies the discrete energy-dissipation law 
  \begin{equation}\label{d_energy_dis_our_scheme}
    \frac{\mathcal{F}_h(\bm{\Xi}^{n+1}) - \mathcal{F}_h(\bm{\Xi}^n)}{\tau} \leq  - \frac{1}{2 \tau^2} {\sf D}_n^{*} (\bm{\Xi}^{n} - \bm{\Xi}^{n+1}) \cdot (\bm{\Xi}^{n} - \bm{\Xi}^{n+1}) \leq 0,
  \end{equation}
%However, since $J_n(\bm{\Xi})$ defined in (\ref{define_Jn}) is a non-convex functional, we still need to 

%By doing this, we can guarantee  
% Although it might be difficult to find an exact minimizer of (\ref{}), i.e., a solution of (\ref),

{{\color{blue}} For particular form of free energy}, following \cite{carrillo2018lagrangian}, we can prove the existence of a minimal solution of the optimization problem (\ref{Min_Problem}):
\begin{proposition}\label{Thm1}
  %Let $\Omega_0$ 
  % {\color{blue} For a given initial condition $\varphi_0(\X)$, and a triangulation $\mathcal{T}_h$, 
  % if there exists $C > 0$ such that $|\nabla_{\X} \varphi_0(\X_e)| > C$ for all $e = 1, 2 \ldots M$}
  For a given initial condition $\varphi_0(\X)$, if the free energy density $\mathcal{W}(\X, F)$ (see (\ref{Energy_F}) for the definition) satisfies $\mathcal{W}(\X, F)  > 0$ for $\det F > 0$ and 
  \begin{equation}\label{Assumption_WF}
    \mathcal{W}(\X, F) \rightarrow 0, \quad \det F \rightarrow \infty,
  \end{equation}
then for given $\bm{\Xi}^n \in F_{ad}^{\bm{\Xi}}$, there exists a solution $\bm{\Xi}^{n+1}$ to numerical scheme (\ref{Implicit}) such that the following discrete energy dissipation law holds, i.e.,
% for some constant $C \in (\frac{1}{2}, 1)$,
\end{proposition}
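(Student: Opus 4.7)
The plan is to apply the direct method of the calculus of variations in the finite-dimensional setting, exploiting the reformulation of scheme (\ref{Implicit}) as the minimization problem (\ref{Min_Problem}). First I would extend $J_n$ from $\mathcal{S}_{ad}^h$ to all of $\mathbb{R}^K$ by setting $J_n(\bm{\Xi}) = +\infty$ whenever $\bm{\Xi} \notin \mathcal{S}_{ad}^h$, as already suggested after (\ref{define_Jn}). Since $\bm{\Xi}^n \in \mathcal{S}_{ad}^h$ is an admissible competitor with $J_n(\bm{\Xi}^n) = \mathcal{F}_h(\bm{\Xi}^n) < +\infty$, the infimum is finite, so any minimizer produced must land in $\mathcal{S}_{ad}^h$.

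The next step is coercivity. Because $\bm{\Xi}^n$ is fixed, ${\sf D}^n_*$ is a constant positive-definite matrix (by the appendix result that ${\sf K}^n$ is positive definite on $\mathcal{S}_{ad}^h$ and ${\sf M}^n$ is positive semidefinite, so $\nu {\sf K}^n$ makes the sum positive definite). Combined with $\mathcal{F}_h(\bm{\Xi}) \geq 0$, which follows from the assumption $\mathcal{W}(\X, F) > 0$ for $\det F > 0$, the quadratic term alone gives
\begin{equation*}
J_n(\bm{\Xi}) \geq \frac{\lambda_{\min}({\sf D}^n_*)}{2\tau}|\bm{\Xi} - \bm{\Xi}^n|^2,
\end{equation*}
so $J_n(\bm{\Xi}) \to +\infty$ as $|\bm{\Xi}| \to +\infty$. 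Then I would take a minimizing sequence $\{\bm{\Xi}_k\}$, extract a convergent subsequence by coercivity, and try to pass to the limit using lower semicontinuity of $J_n$.

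The hard part, and the main obstacle, is proving lower semicontinuity across the boundary $\partial \mathcal{S}_{ad}^h$ where some $\det F_e(\bm{\Xi}_k) \to 0^+$. On the open set $\mathcal{S}_{ad}^h$ the functional is smooth in $\bm{\Xi}$, so lower semicontinuity there is immediate; the real content is to show that if $\bm{\Xi}_k \to \bm{\Xi}^*$ with $\det F_e(\bm{\Xi}^*) = 0$ for some element $e$, then $\mathcal{F}_h(\bm{\Xi}_k) \to +\infty$, matching the extended value $J_n(\bm{\Xi}^*) = +\infty$. Using the specific density (\ref{Def_W_phi}), on a degenerating element the gradient term $\tfrac{1}{2}|F_e^{-\rm T}\nabla_{\X}\varphi_0|^2 \det F_e$ scales like $|\nabla_\X \varphi_0|^2 \sigma_{\min}(F_e)^{-1}$ times a bounded factor, which blows up on any element where $\nabla_\X \varphi_0 \not\equiv 0$. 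The structural assumption (\ref{Assumption_WF}) is used together with the positivity of $\mathcal{W}$ to control the complementary regime where $\det F_e \to \infty$, following the strategy of Carrillo et al. \cite{carrillo2018lagrangian}; the combination rules out minimizing sequences escaping to either end of $\mathcal{S}_{ad}^h$ except possibly on elements where $\nabla_\X \varphi_0 \equiv 0$, which requires an auxiliary argument or an assumption that $\varphi_0$ is non-degenerate on the initial triangulation.

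Once a minimizer $\bm{\Xi}^{n+1} \in \mathcal{S}_{ad}^h$ is produced, the last two steps are routine. Since $\mathcal{S}_{ad}^h$ is open, the first-order optimality condition $\nabla_{\bm{\Xi}} J_n(\bm{\Xi}^{n+1}) = 0$ holds, which is exactly the scheme (\ref{Implicit}) after using the definition (\ref{define_Jn}) and the symmetry of ${\sf D}^n_*$. Finally, comparing $J_n(\bm{\Xi}^{n+1}) \leq J_n(\bm{\Xi}^n)$ and rearranging gives
\begin{equation*}
\mathcal{F}_h(\bm{\Xi}^{n+1}) - \mathcal{F}_h(\bm{\Xi}^n) \leq -\frac{1}{2\tau}\bigl({\sf D}^n_*(\bm{\Xi}^{n+1}-\bm{\Xi}^n),\,\bm{\Xi}^{n+1}-\bm{\Xi}^n\bigr),
\end{equation*}
which is precisely the discrete energy dissipation law (\ref{d_energy_dis_our_scheme}) that completes the proposition.
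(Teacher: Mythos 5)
Your proof follows essentially the same route as the paper's: both are the finite-dimensional direct method applied to the minimization reformulation (\ref{Min_Problem}), with the positive-definiteness of ${\sf D}^n_*$ supplying coercivity and the first-order optimality condition recovering (\ref{Implicit}) together with the dissipation inequality (\ref{d_energy_dis_our_scheme}). The paper phrases the compactness step as showing that the sublevel set $\{\bm{\Xi} \in \mathcal{S}_{ad}^h \,:\, J_n(\bm{\Xi}) \leq \mathcal{F}_h(\bm{\Xi}^n)\}$ is nonempty, bounded and closed (following Lemma 3.1 of Carrillo et al.), whereas you use a minimizing sequence plus lower semicontinuity; these are interchangeable.

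The one place you genuinely diverge is the treatment of the boundary $\pp\mathcal{S}_{ad}^h$. The paper does not attempt to derive the blow-up of $\mathcal{F}_h$ as $\det F_e \to 0$ from the concrete density (\ref{Def_W_phi}); it simply invokes the pointwise bound $\mathcal{F}_h(\bm{\Xi}) \geq \mathcal{W}(\X_e,F_e)\,|\tau_e|$ together with the hypothesis that $\mathcal{W}(\X,F) \to \infty$ as $\det F \to 0$, which forces $\det F_e$ to stay uniformly away from zero on the sublevel set. (Note that the displayed assumption (\ref{Assumption_WF}) as printed, $\mathcal{W} \to 0$ as $\det F \to \infty$, is evidently a typo: the proof uses the blow-up as $\det F \to 0$, and coercivity of the quadratic term already rules out $|\bm{\Xi}| \to \infty$, so no condition at $\det F \to \infty$ is needed — your remark that (\ref{Assumption_WF}) controls that regime is superfluous.) Your alternative — extracting the blow-up from the gradient term $\tfrac12 |F_e^{-\rm T}\nabla_{\X}\varphi_0|^2 \det F_e$ — is where the argument breaks: on elements where $\nabla_{\X}\varphi_0 \equiv 0$ the entire integrand $W\det F_e$ tends to zero, not infinity, as $\det F_e \to 0$, and for a phase-field initial datum such elements are the norm in the bulk where $\varphi_0 \equiv \pm 1$. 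You flag this honestly, but it means your version proves the proposition only under an added nondegeneracy assumption on $\varphi_0$, not under the stated hypothesis on $\mathcal{W}$. The paper sidesteps this by assuming the abstract blow-up condition and then conceding, immediately after the proof, that this condition fails for the classical phase-field energy — so neither argument covers the case actually computed in the experiments, but the paper's at least proves the proposition as (intended to be) stated.
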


% The proof of this theorem is almost the same with theorem 3.1 in \cite{}.
\begin{proof}

  % where $\frac{\delta E_h}{\delta \bm{\Xi}} = - \frac{\delta A_h}{\delta \bm{\Xi}}$ is used,  and ${\sf D}^*_n$ is a symmetric matrix. %Note ${\sf D}^*$ is a symmetric matrix,
  %Finding a solution of (\ref{scheme_2D_1}) is equivalent to solving
% Since a minimizer of the minimization problem (\ref{Min_Problem}) is the solution of (\ref{Implicit}), we only need to show that there exists a minimizer of $J_n(\bm{\Xi})$ in
%\begin{equation*}
%  \mathcal{S}_{ad}^h = \left\{ \bm{\Xi} \in \mathbb{R}^K ~|~ \det F_e(\bm{\Xi}) > 0, \quad e = 1, \ldots M \right\}.
% \end{equation*}
Due to the assumption (\ref{Assumption_WF}), we know for $\forall \bm{\Xi} \in \pp \mathcal{S}_{ad}^{\bm{\Xi}}$, $J_n(\bm{\Xi}) = \infty$. Following the proof in the Lemma 3.1 in \cite{carrillo2018lagrangian}, the existence of a minimizer can be obtained by showing the set
\begin{equation}
\mathcal{A} = \{ \bm{\Xi} \in \mathcal{S}_{ad}^h ~|~ J_n(\bm{\Xi}) \leq \mathcal{F}_h(\bm{\Xi}^n)  \}
\end{equation}
is a non-empty compact subset of $\mathbb{R}^K$. Obviously, $\bm{\Xi}^n \in \mathcal{A}$, so $\mathcal{A}$ is non-empty. On the other hand, since ${\sf D}^*_n$ is positive-definite, there exists $\lambda_1 > 0$ such that $\forall \bm{\Xi} \in \mathcal{S}_{ad}^g$
\begin{equation}
\| \bm{\Xi} - \bm{\Xi}^n \|^2 \leq \frac{1}{\lambda_1}  {\sf D}^*_n (\bm{\Xi} - \bm{\Xi^{n}}) \cdot (\bm{\Xi} - \bm{\Xi^{n}}) \leq  \frac{2 \tau}{\lambda_1} \left(\mathcal{F}_h(\bm{\Xi}^n) - \mathcal{F}_h(\bm{\Xi}) \right),
\end{equation}
which indicates $\mathcal{A}$ is bounded. So we only need to show $\mathcal{A}$ is closed in $\mathbb{R}^{K}$. For any converged sequence $\{ \bm{\Xi}^{(k)} \}_{k=1}^{\infty} \subset \mathcal{S}_{ad}^h$, our goal is to show that the limit $\widetilde{\bm{\Xi}}$ is in $\mathcal{S}_{ad}^h$. Note for $\forall e \in \{ 1, 2, \ldots M \}$ and all $k$,
\begin{equation}
  \begin{aligned}
    \mathcal{F}_h(\bm{\Xi}^n) \geq \mathcal{F}_h(\bm{\Xi}^{(k)}) \geq \mathcal{W}(\X_e, F_e) |\tau_e|
    %   \frac{1}{2} | F_e^{-\rm T} \nabla_{\X} \varphi_0 (\X_e) |^2 \det F_e |\tau_e|, 
  \end{aligned}
\end{equation}
where $|\tau_e|$ is the area of element $\tau_e$. 
%Since {\color{blue} $|\nabla_{\X} \varphi_0(\X)| > C > 0$}, it is easy to see $| F_e^{-\rm T} \nabla_{\X} \varphi_0 (\X_e) |^2 \det F_e  \rightarrow \infty$ if $\det F_e \rightarrow 0$ {\color{red} NOT TURE in $d \geq 2$}, 
Since $\mathcal{W}(X, F) \rightarrow \infty$ if $\det F \rightarrow 0$, we can conclude that $\det F_e(\bm{\Xi}^{(k)}) > 0$ is uniformly bounded away from zero. So $\det F_e(\bar{\bm{\Xi}}) > 0$ for all $e$, which means $\widetilde{\bm{\Xi}} \in \mathcal{S}_{ad}^h$.

 % {\color{blue} $$F_{ad}^{\bm{\Xi}} = \left\{ \bm{\Xi} \in \mathbb{R}^K ~|~ \det F_e(\bm{\Xi}) > 0, \quad e = 1, \ldots M \right\}.$$}
 % Note $\forall \bm{\Xi} \in \pp F_{ad}^{\bm{\Xi}}$ (there exists some $e$ such that $\det F_e(\bm{\Xi}) = 0$),  $J(\bm{\Xi}) = \infty$. % for $\forall \bm{\Xi} \in \pp F_{ad}^{\bm{\Xi}}$,
 % so $J(\bm{\Xi})$ is coercive on $ F_{ad}^{\bm{\Xi}}$. In the meantime, $J(\bm{\Xi})$ is a continuous function of $\bm{\Xi}$ in the closed convex set $\overline{F_{ad}^h}$, so $J(\bm{\Xi})$ at least has a minimizer in $F_{ad}^{\bm{\Xi}}$.
% If $\bm{\Xi}^{n+1}$ is a global minimizer of $J_n(\bm{\Xi})$ in $\mathcal{S}_{ad}^h$, we have

% which completes the proof.
\end{proof}

Under the same condition, we can prove the convergence of series $\{ \bm{\Xi}^n \}$ for the discrete scheme for the given triangulation and fixed $\tau$.
\begin{proposition}
  For the given triangulation and fixed $\tau$, if the free energy density $\mathcal{W}(\X, F)$ (see (\ref{Energy_F}) for the definition) satisfies $\mathcal{W}(\X, F)  > 0$ for $\det F > 0$ and 
  \begin{equation}
    \mathcal{W}(\X, F) \rightarrow 0, \quad \det F \rightarrow \infty,
  \end{equation}
  for given $\varphi_0(\X)$, then the series $\{\bm{\Xi}^n\}$ converges to a stationary solution of the discrete energy $\mathcal{F}_h(\bm{\Xi})$.
\end{proposition}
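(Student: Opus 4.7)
The plan is to combine the energy-dissipation inequality (\ref{d_energy_dis_our_scheme}) with the coercivity hypotheses on $\mathcal{W}$ (note that the assumption actually needed, consistent with the proof of Proposition 3, is $\mathcal{W}(\X,F)\to\infty$ as $\det F \to 0$, together with $\mathcal{W}\ge 0$) to show first that $\{\bm{\Xi}^n\}$ lies in a fixed compact subset of $\mathcal{S}_{ad}^h$, then that consecutive steps vanish, and finally that every subsequential limit is a critical point of $\mathcal{F}_h$.

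First I would iterate the discrete energy-dissipation law (\ref{d_energy_dis_our_scheme}) to conclude that $\mathcal{F}_h(\bm{\Xi}^n)$ is non-increasing and, being bounded below by $0$, converges to some $\mathcal{F}_\infty \ge 0$. Since $\mathcal{F}_h(\bm{\Xi}^n)\le \mathcal{F}_h(\bm{\Xi}^0)$ for all $n$, the same coercivity argument used in Proposition 3 shows that $\det F_e(\bm{\Xi}^n)$ is uniformly bounded away from zero on every element, so the whole sequence stays in a compact subset $\mathcal{K}\subset \mathcal{S}_{ad}^h$. On $\mathcal{K}$ the matrix ${\sf D}^{n}_{*}$ depends continuously on $\bm{\Xi}^n$ and is uniformly positive definite, so there is a $\lambda_0>0$ with ${\sf D}^{n}_{*}\ge \lambda_0 \mathrm{I}$ for all $n$. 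Together with (\ref{d_energy_dis_our_scheme}) this gives
\begin{equation*}
\tfrac{\lambda_0}{2\tau}\|\bm{\Xi}^{n+1}-\bm{\Xi}^n\|^2 \le \mathcal{F}_h(\bm{\Xi}^n) - \mathcal{F}_h(\bm{\Xi}^{n+1}),
\end{equation*}
and summing over $n$ yields $\sum_n \|\bm{\Xi}^{n+1}-\bm{\Xi}^n\|^2 < \infty$, hence $\bm{\Xi}^{n+1}-\bm{\Xi}^n\to 0$.

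Next, by compactness of $\mathcal{K}$, there exists a convergent subsequence $\bm{\Xi}^{n_k}\to \bm{\Xi}^\ast\in\mathcal{K}$. I would then pass to the limit in the optimality condition (\ref{Implicit}): since $\bm{\Xi}^{n_k+1}-\bm{\Xi}^{n_k}\to 0$, also $\bm{\Xi}^{n_k+1}\to \bm{\Xi}^\ast$; continuity of $\frac{\delta \mathcal{F}_h}{\delta \bm{\Xi}}$ and of ${\sf D}^{\cdot}_{*}$ on $\mathcal{K}$ then gives $\frac{\delta \mathcal{F}_h}{\delta \bm{\Xi}}(\bm{\Xi}^\ast)=0$, so every subsequential limit is a stationary point of $\mathcal{F}_h$.

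The main obstacle will be promoting subsequential convergence to convergence of the full sequence. In the non-convex setting the set of stationary points may form a continuum, and even with $\|\bm{\Xi}^{n+1}-\bm{\Xi}^n\|\to 0$ one cannot in general rule out drift along a level set. The standard route is to invoke the \L{}ojasiewicz--Simon gradient inequality, which applies because $\mathcal{F}_h$ is a real-analytic function of $\bm{\Xi}$ on $\mathcal{S}_{ad}^h$ (it is a finite sum of rational and polynomial expressions in the components of $\bm{\Xi}$, with $\det F_e>0$); combined with the summability estimate above this yields $\sum_n \|\bm{\Xi}^{n+1}-\bm{\Xi}^n\|<\infty$ and hence convergence of the entire sequence to a single stationary point. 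If one is only claiming convergence in the subsequential sense, the preceding paragraph already suffices; the \L{}ojasiewicz argument is what I would use if full-sequence convergence is required.
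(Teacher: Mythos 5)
Your argument follows essentially the same route as the paper's: monotonicity and boundedness of $\mathcal{F}_h(\bm{\Xi}^n)$, the coercivity of $\mathcal{W}$ to keep $\det F_e$ uniformly bounded away from zero (and you are right that the hypothesis actually used is $\mathcal{W}\to\infty$ as $\det F\to 0$, not the stated $\mathcal{W}\to 0$ as $\det F\to\infty$; the paper has the same inconsistency), uniform positive definiteness of ${\sf D}^n_{*}$ via the stiffness matrix, the telescoping sum giving $\sum_n\|\bm{\Xi}^{n+1}-\bm{\Xi}^n\|^2<\infty$, and passage to the limit in the scheme to identify the limit as a critical point. Where you differ is at the final step, and here your version is the more careful one: the paper asserts that $\|\bm{\Xi}^{k+1}-\bm{\Xi}^k\|\to 0$ "indicates the series converges to some point," which is not a valid inference (a sequence can drift along a level set with vanishing increments but non-summable total displacement). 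You correctly flag this, note that compactness only yields subsequential limits each of which is stationary, and propose the \L{}ojasiewicz gradient inequality for the real-analytic function $\mathcal{F}_h$ on $\mathcal{S}_{ad}^h$ to upgrade to full-sequence convergence. That is the standard and correct repair; the paper's proof as written has a gap at exactly this point, so your proposal is not merely equivalent but strictly more rigorous in its concluding step.
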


\begin{proof}
  We first prove that there exist $c_0$ such that
  \begin{equation*}
      \bm{\Xi}^{\rm T} {\sf D}_n^* \bm{\Xi} \geq c_0 \| \bm{\Xi} \|^2, \quad \forall \bm{\Xi} \in \mathbb{R}^K.
      % \leq c_1 \| \bm{\Xi} \|^2, 
  \end{equation*}
Since ${\sf M}_n^*$ is non-negative (see Appendix. A for the proof), we only need to show, for $( \sf K_0)^*_n$, there exist $c'_0$ such that
\begin{equation*}
  \bm{\alpha}^{\rm T} ({\sf K_0})_n^* \bm{\alpha} \geq c'_0 \| \bm{\alpha} \|^2, \quad \forall \bm{\alpha} \in \mathbb{R}^N. 
\end{equation*}
Indeed, note $\mathcal{F}_h(\bm{\Xi}^n) \leq \mathcal{F}_h (\bm{\Xi}^0)$, we have
   \begin{equation*}
    \begin{aligned}
    & \mathcal{F}_h (\bm{\Xi}^0) \geq \mathcal{F}_h(\bm{\Xi}^{n}) \geq   \mathcal{W}(\X_e, F_e) |\tau_e|, \\
    \end{aligned}
   \end{equation*}
following the same argument in the proof of the previous theorem,  we can show that $\det F_e$ is uniformly bounded away from zero, i.e., there exists $c_b > 0$ such that $\det F_e(\bm{\Xi}^{n}) > c_b$ for $\forall e$, . 
Hence,
\begin{equation*}
  \bm{\alpha}^{\rm T} ({\sf K_0})_n^* \bm{\alpha} = \sum_{e = 1}^M \int_{\tau_e} \Big|\sum_{i=1}^N \alpha_i \nabla \psi_i \Big|^2 \det F_e \dd \X \geq c_b \int_{\Omega} \Big|\alpha_i \nabla \psi_i \Big|^2 \dd \X \geq c_b \lambda_1 \| \bm{\alpha} \|^2,
\end{equation*}
where $\lambda_1 > 0$ is the smallest eigenvalue of the stiff matrix.

Then by theorem \ref{Thm1}, we have
  \begin{equation}
    \begin{aligned}
      c_0 \| \bm{\Xi}^{n+1} - \bm{\Xi}^{n} \|^2 & <  \left( ({\sf D}_{*}^{n} (\bm{\Xi}^{n+1} - \bm{\Xi}^n),  \bm{\Xi}^{n+1} - \bm{\Xi}^n \right) \\
      & \leq 2 \tau (\mathcal{F}_h(\bm{\Xi}^n) - \mathcal{F}_h(\bm{\Xi}^{n+1})) \\
    \end{aligned}
  \end{equation}
  Hence,
  \begin{equation}
    \sum_{k=0}^n c_0 \| \bm{\Xi}^{k+1} - \bm{\Xi}^{k} \|^2 \leq 2 \tau (\mathcal{F}_h(\bm{\Xi}^0) - \mathcal{F}_h(\bm{\Xi}^{n+1}) ) \leq C,
  \end{equation}
  where $C$ is independent with $n$. So
  \begin{equation}
    \lim_{k \rightarrow \infty}  \| \bm{\Xi}^{k+1} - \bm{\Xi}^{k} \|^2  = 0,
  \end{equation}
  which indicates the series $\{ \bm{\Xi}_k  \}_{k=1}^{\infty}$ converges to some point in $\mathbb{R}^K$, denoted by $\bm{\Xi}^{*}$. Following the same argument in the proof of the theorem \ref{Thm1}, we can show $\bm{\Xi}^{*} \in \mathcal{S}_{ad}^h$. Moreover, since
  \begin{equation}
  \lim_{n \rightarrow \infty} \frac{\delta \mathcal{F}_h}{\delta \bm{\Xi}} (\bm{\Xi}^{n+1})  =  \lim_{n \rightarrow \infty} - \frac{1}{\tau} D_n^* (\bm{\Xi}^{n+1} - \bm{\Xi}^{n}),
  \end{equation}
  we have $\frac{\delta \mathcal{F}_h}{\delta \bm{\Xi}} (\bm{\Xi}^*) = 0$, so $\bm{\Xi}^*$ is a stationary solution of the discrete energy $\mathcal{F}_h(\bm{\Xi})$.
\end{proof}

  It should be emphasised that the condition (\ref{Assumption_WF}) doesn't hold for classical phase-field free energy. Hence, it might be difficult to show the existence of the numerical scheme that minimizer the $J_n({\bm \Xi})$. Moreover, even the minimizer of $J_n({\bm \Xi})$ 
  exists, our line-search based optimization cannot guarantee to find it in each iteration. 
This is a limitation of the current numerical approach.
In practice, we choose a small value of $\tau$ and large value of $\nu$ such that the optimization problem can be handled by a standard optimization method, such as L-BFSG. Indeed, the first term in (\ref{define_Jn}) can be viewed as a regularization term, which restricts us to find a minimizer around $\bm{\Xi}^n$. The positive-definite condition on ${\sf D}^*_n$ is crucial,  otherwise, $J_n(\bm{\Xi})$ may have infinite minimizer even around $\bm{\Xi}^n$. In all numerical experiments shown in the next section, we adopt L-BFGS with line search to find a minimizer $\bm{\Xi}^{n+1}$ in the admissible set that decreases the discrete energy. 
% For classical phase field free energy, $J_n({\Xi}^{n+1})$
The Lagrangian calculations will terminate if no $\bm{\Xi}^{n+1}$ is found or $|\mathcal{F}_h({\bm \Xi}^{n+1}) - \mathcal{F}_h({\bm \Xi}^{n})| \leq \epsilon$, where $\epsilon$ is the given tolerance.

% In practice, the optimization problem will be easier to solve for large $\nu$ and small $\tau$. However, the converge rate to an equlibrium will be smaller for larger $\nu$ and smaller $\tau$.
%Within the energy stable property,

\subsection{Reinitialization}
In the numerical implementation, we can compute $\x_h^{n+1}$ by
\begin{equation}
\x_h^{n+1}(\X) = \widetilde{\x}^{n+1}_h \circ \x^{n}_h(\X),
\end{equation}
which is equivalent to set $\X_i = \xi^n_i$ after each iteration as in \cite{junge2017fully}. 
An advantage of this treatment that in each iteration, we only need to compute a close to identity map \cite{junge2017fully}. So the optimization problem (\ref{Min_Problem}) is often easy to solve.

One can view this as a reinitialization procedure. More complicated reinitialization procedure can be incorporated in our numerical framework. Indeed, for given $\x^{n}(\X)$, we also obtain the numerical solution $\phi^n$ defined at mesh points, that is
\begin{equation}
\phi^n(\bm{\xi}_i^n) = \phi_0(\X).
\end{equation}
When the mesh become too skew, we can interpolate the numerical solution $\phi^n$ into a more regular mesh, obtained by coarsening or refining the current mesh \cite{bank1996algorithm}. More importantly, we can also apply Eulerian solver by using $\phi^n$ as the initial condition, to update the value at each mesh point. This is close to the idea in velocity-based moving mesh method \cite{baines2005moving}, which update both positions and values of mesh points. Unlike the traditional velocity-based moving mesh methods, our solution is spontaneously updated when the mesh moves.
We'll explore reinitialization procedures in details in the future work.

\section{Numerical validation and discussion}\label{sec:results}
In this section, we apply our Lagrangian scheme to several problems modeled by Allen-Cahn type phase-field models. Most of numerical examples used here are widely studied by Eulerian methods previously \cite{chen1998applications,feng2006spectral, shen2009efficient, di2008general, zhang2007adaptive}.
Numerical results show that, by choosing a suitable initial condition, our methods can capture the thin diffuse interfaces with a small number of mesh points, and reach a desired equilibrium.
% the robustness and ability of our scheme in capturing the thin diffuse interfaces for phase-field models .

Since we might apply a few Eulerian step in following numerical examples. here we brief introduce the Eulerian method that we'll use.
There are a lot of Eulerian methods for Allen-Cahn type phase-field model. By the spirit of ``discrete-then-variation'' approach, here we use an Eulerian solver derived by the discrete energetic variational approach. We use the same finite element space with the Lagrangian solver, and approximate the phase variable $\varphi$ by
\begin{equation}\label{approximate_phi}
  \varphi_h(X, t) = \sum_{i=1}^N \gamma_i(t) \psi_i(X),
\end{equation}
where $\psi_i(X)$ are hat functions on the current mesh.
Inserting (\ref{approximate_phi}) into the continuous energy-dissipation law, we can get an discrete energy-dissipation law with the discrete energy and the discrete dissipation given by
\begin{equation}
  \begin{aligned}
  & \mathcal{F}_h^{\rm{Euler}} = \sum_{e = 1}^N \int_{\tau_e} \frac{1}{2} \Big| \sum_{i=1}^N \gamma_i \nabla \psi_i(\X) \Big|^2 + \frac{1}{\epsilon^2} \sum_{i=1}^N (\gamma_i^2 - 1)^2 \psi_i(\X)  \dd \X, \\
  & \mathcal{D}_h^{\rm{Euler}} = \sum_{e = 1}^N \int_{\tau_e} \Big| \sum_{i=1}^N \gamma_i'(t) \psi_i(\X) \Big|^2 \dd \X, \\
  \end{aligned}
\end{equation}
respectively, where we also introduce the piecewise linear approximation to the nonlinear term in the discrete energy. This form of discrete energy was used in \cite{xu2019stability} and has an advantage in preserving the maximum principle at the discrete level \cite{xu2019stability}. After we obtain the semi-discrete equation of $\gamma_i(t)$, we solve it by implicit Euler method, which can also be reformulated into a minimization problem, similar to (\ref{Min_Problem}). Indeed, the Eulerian solver we used here is close to that in \cite{xu2019stability}.

\subsection{Quasi-1D example}
First, we consider a quasi-1D problem, in which $\Omega = [-1, 1]^2$. We impose Dirichlet boundary condition on $x = -1$ and $1$, that is
\begin{equation}\label{BD1}
\varphi(t, -1, y) = -1, \quad \varphi(t, 1, y) = 1,
\end{equation}
and Neumann boundary condition on $y = -1$ and $1$, that is
\begin{equation}\label{BD2}
\frac{\pp \varphi}{\pp y}(t, x, \pm 1) = 0.
\end{equation}
If the initial condition $\varphi_0(\X)$ satisfies (\ref{BD1}) and (\ref{BD2}), we can impose the boundary condition
\begin{equation}\label{BD_xy}
\x(\pm 1, Y, t) = (\pm 1, Y), \quad  \x(X, \pm 1) = (x, \pm 1)
\end{equation}
for the flow map $\x(\X, t): (X, Y)  \mapsto (x, y)$ such that $\varphi(\x(\X, t))$ satisfies (\ref{BD1}) and (\ref{BD2}). The boundary condition (\ref{BD_xy}) can be satisfied if $\uvec(X, Y)$ satisfies
 \begin{equation}
    \uvec(t, \pm 1, Y) = 0, \quad \uvec(t, X, \pm 1) \cdot \bm{n} = 0,
\end{equation}
where $\bm{n} = (0, 1)^{\rm{T}}$. In the following, we take the initial condition as
\begin{equation}
\varphi_0(X, Y) = - \tanh (5 X);
\end{equation}

\begin{figure}[!h]
  \includegraphics[width = \linewidth]{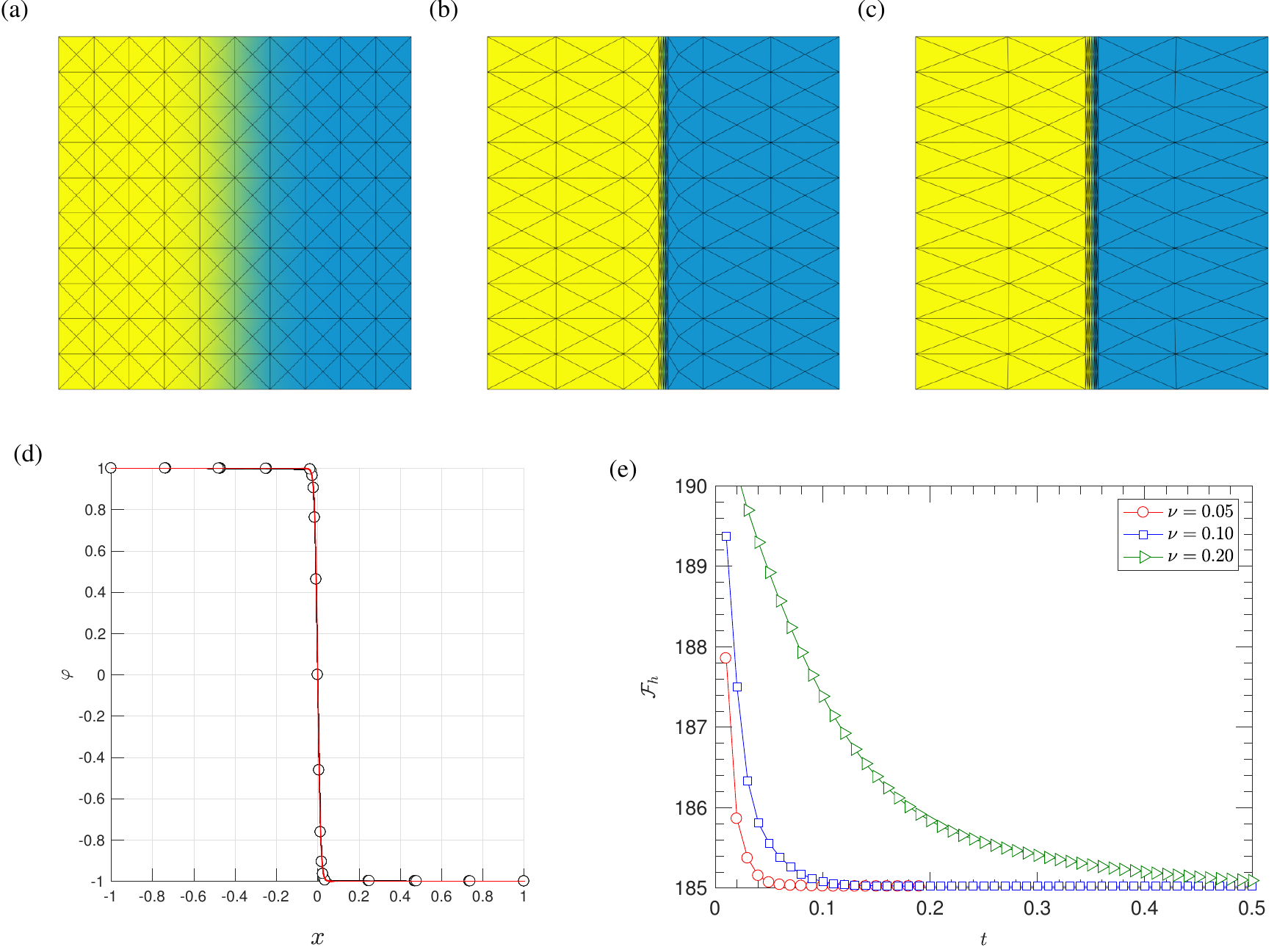}
  \caption{Numerical results for the quasi-1D example with $\epsilon^2 = 1e-4$. (a) - (c) Initial conditions and numerical solution at $t = 0.01$ and $t = 0.2$ (equilibrium). (d)Side view of the equilibrium solution, compared with the 1D exact solution. (e) Discrete free energy $\mathcal{F}_h$ along with time for various $\nu$.  }\label{Res1}
\end{figure}

Typical meshes and computed solution for $\epsilon^2 = 10^{-4}$ and $\nu = 0.05$ at $t = 0, 0.01$ and $0.2$ are shown Fig. \ref{Res1} (a)-(c). %It can be noticed that 
The initial mesh is the uniform mesh with $M = 400$.
We compare the obtained equilibrium solution with the 1D exact solution $\varphi(x) = - \tanh\left( \frac{x}{\sqrt{2} \epsilon} \right)$ in Fig. \ref{Res1} (d), in which the circles represent the projection of mesh points in the x-z plane, and the red line is the exact solution. It can be noticed that the equilibrium numerical solution can capture the thin interface with a small number of mesh points. 
Due to the presence of $|\nabla \uvec|^2$ term in the dissipation, the vertical velocity of all mesh points are almost zero, which is essential for a successful Lagrangian computation in this case. 

Fig. \ref{Res1} (e) shows the discrete energy as a function of time $t$ for different values of $\nu$. One can notice that our scheme is energy stable in all cases and all calculations go to the same equilibrium. The convergence to the equilibrium becomes slower when $\nu$ become larger. On the other hand, numerical tests show that the optimization problem (\ref{Min_Problem}) in each iteration will be easier to solve for larger $\nu$. In general, the value of $\nu$ also effect the equality of the obtained mesh.
We are not going to discuss the choice of $\nu$ in this paper, in the following, we choose larger $\nu$ for smaller $\epsilon^2$.

 Compared with Eulerian method, the Lagrangian methods has advantage in capture the diffuse interface with a small number of number points. However, the numerical approximation in the bulk region might be poor since most of the mesh points are concentrated in the interface region. To illustrate this, we perform a accuracy test for this example. Since the solution is y-invariant, we take $\Omega = [-1, 1] \times [-0.1, 0.1]$ in the accuracy test, and only look at the numerical error for the equilibrium solution on $y = -0.1$ in the following. In Fig. \ref{Error1}(a) - (b), we show show the numerical error obtained by the Lagrangian (blue circles) at each mesh point on $y = - 0.1$ with $h = 0.2$ and $h = 0.1$ respectively. It can be noticed that in both cases, the numerical error attain its maximum 
 at the transition area between the diffuse interface and  the bulk region. We can apply the Eulerian solver to the obtained solution, the numerical error after applying the Eulerian solver is shown in Fig. \ref{Error1} by red squares. Interestingly, although the Eulerian solver decrease the $L^2$-error of the numerical solution (so is the discrete free energy), the numerical error near the interface might increase a bit. 
 This simple numerical test suggests that the Lagrangian methods has advantage in capturing the thin diffuse interface, while the Eulerian methods can achieved better numerical approximation to the solution in the bulk region. 
 We should emphasize that for the phase-field type model, Eulerian methods cannot obtain a right results if the mesh size is larger than the diffuse interface. So it is a nature idea to combine the Lagrangian method with some Eulerian method.

 \begin{figure}[!h]
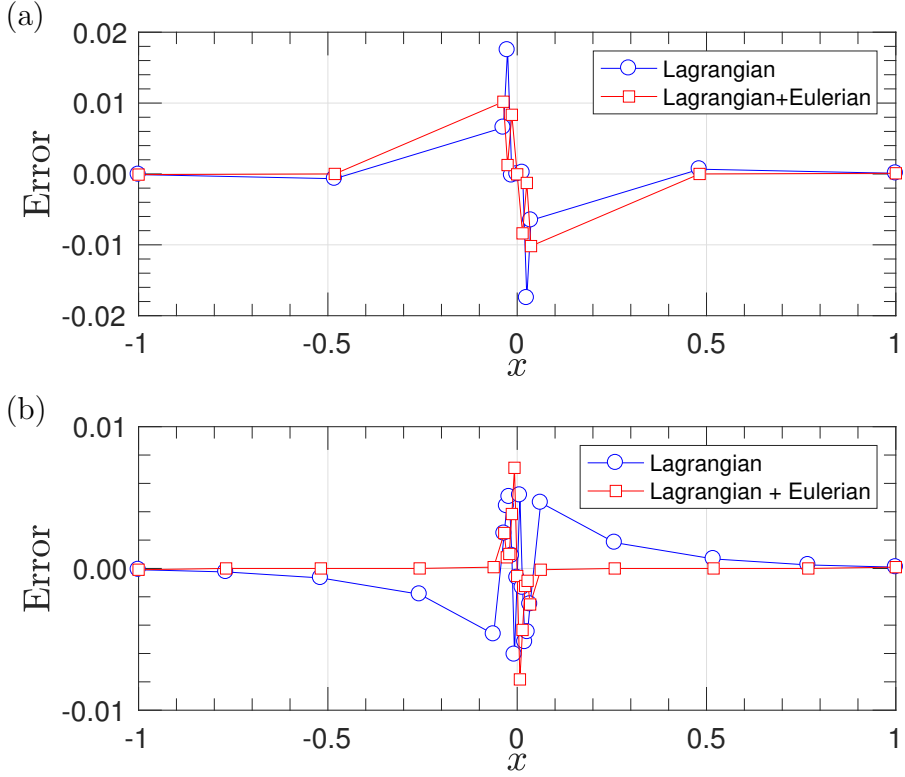

  \centering
  \begin{overpic}[width = 0.9 \linewidth]{Error_h_0_2_1e-4.eps}
    \put(-2, 40){\large (a)}
  \end{overpic}

  \vspace{1.5em}
  \begin{overpic}[width = 0.9 \linewidth]{Error_h_0_1_1e-4.eps}
    \put(-2, 40){\large (b)}
  \end{overpic}
  \caption{Numerical Error on $y = -0.1$ for the equilibrium solution, obtained by Lagrangian and Lagrangian+Eulerian methods: (a) $h = 0.2$ and (b) $h = 0.1$.   }\label{Error1}
\end{figure}

 We quantify the numerical error for Lagrangian method with different choice of $h$ for $\epsilon^2 = 10^{-3}$ and $10^{-4}$ . 
 The error in space is measured by the $L^{\infty}-$ norm defined by
 \begin{equation*}
 \|e_h\|_{\infty} = \max_{i \in \{ 1, 2, \ldots N \}} |\varphi^h(\x_h(\X_i, T)) - \varphi^{eq} (\x_h(\X_i))|,
 \end{equation*}
 where $T$ is the final time for the Lagrangian calculations.  
 Here we only test the convergence rate near the interface. It can be noticed that near the interface, our Lagrangian method can achieve second order in space. Another interesting phenomenon is that the numerical perform seems to be independent with $\epsilon$. We should also mention that the numerical error also is sensitive to the choice of the initial condition $\varphi_0(\X)$. A detailed numerical analysis is needed in order to understand these phenomena for the Lagrangian method.
  
 % Interestingly, although the $L^{2}$-error becomes smaller after apply the Eulerian solver, the numerical error near the ()

\begin{table}[!h]
  \begin{center}
  \begin{tabular}{c  c | c  c | c  c |  c  c}
    % \doubleRule \\
    \hline
    \multicolumn{4}{c|}{$\epsilon = 10^{-3}$}    &   \multicolumn{4}{c}{$\epsilon = 10^{-4}$}  \\ \hline
    h      &  $\tau$   &  $L^{\infty}$-error  & Order           &  h       &  $\tau$      & $L^{\infty}$-error & Order     \\ \hline
    0.2    &  1/100  &  0.0185            &        &  0.2     &  1/100   & 0.0175               &             \\  \hline  
    0.1    &  1/400   &  0.0059         &  1.6487      &  0.1     &  1/400    &  0.0052              &    1.7508            \\ \hline
    0.05   &  1/1600   & 0.0015       &  1.9758        &  0.05    &  1/1600   &  0..0015          &   1.7935     \\  \hline        
  \end{tabular}
  \caption{The convergence rate of numerical solutions near the interface ( $x \in [-3\epsilon, 3 \epsilon]$) with $\nu = 0.05$.}
  \label{Table3}
\end{center}
\end{table}

% Moreover, the numerical error also depends on the choice of $\varphi_0(\X)$. In this simple example, 
% A more extreme example. can be found in the 1D simulation results in \cite{cheng2019new}, due to initial condition  

%  Instead of using some artificial indicate function, we generate the non-uniform mesh based on the physical free energy.

% In practice, the optimization problem will be easier to solve for large $\nu$ and small $\tau$. However, the converge rate to an equlibrium will be smaller for larger $\nu$ and smaller $\tau$.

\subsection{Shrinkage of a circular domain}

As a numerical test, we consider shrinkage of a circular domain in two-dimension. It is a classical benchmark problem for the Allen-Cahn equation \cite{chen1998applications,feng2006spectral, shen2009efficient}, in which the circular interface governed by the Allen-Cahn equation will shrink and eventually disappear.

\begin{figure}[!h]
  \includegraphics[width = \linewidth]{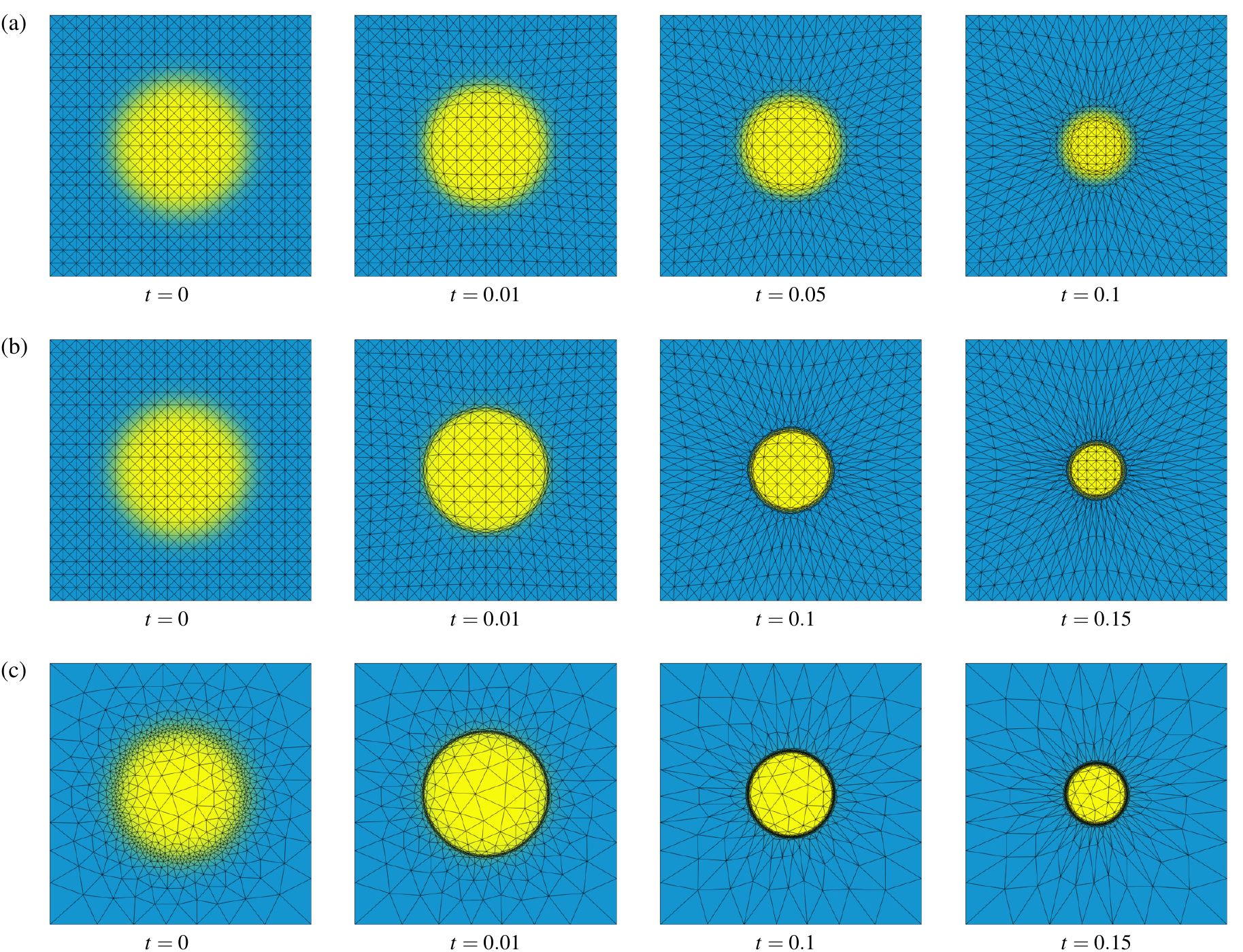}
  \caption{(a)  Numerical results of the evolution of the circular domain for $\epsilon^2 = 1e-3$ [$\tau = 10^{-2}$]. (b) - (c) Numerical results of the evolution of the circular domain for $\epsilon^2 = 10^{-4}$ [$\tau = 10^{-2}$]: (b) Uniform mesh, (c) Non-uniform mesh.}\label{Shirkage}
\end{figure}

We take $\Omega = [-1, 1]^2$ and impose the Dirichlet condition $\varphi(\x) = -1, \x \in \pp \Omega$. The initial condition $\varphi_0(\X)$ is taken as
\begin{equation}
\varphi_0(\X) = \tanh (10 (\sqrt{X^2 + Y^2} - 0.5) ),
\end{equation}
such that the Dirichlet condition satisfies numerically. 
It is worth pointing out that in our Lagrangian methods, it is crucial to choose a proper initial condition. For the phase model, it is often choose $\varphi_0(\X)$ in a hyperbolic tangent form such that $\varphi_0 \in [-1, 1]$, and the width of initial interface should be larger than the mesh size, since we need enough mesh points in the region of interface.
% $\varphi_0(X)$ cannot be too sharp since we need enough mesh points in the interface initially.

  Fig. \ref{Shirkage} (a) shows the numerical results for $\epsilon^2 = 10^{-3}$ with $\nu = 1$ at various time in a uniform mesh ($M = 1600$), while Fig. \ref{Shirkage} (b) and (c) show the numerical results for $\epsilon^2 = 10^{-4}$ with $\nu = 10$ in uniform ($M = 1600$) and non-uniform meshes ($M = 1348$) respectively. The non-uniform mesh is generated by DistMesh \cite{persson2004simple}. We choose larger $\nu$ for smaller $\epsilon^2$ to control the quality of the mesh. It can be noticed that in all three cases, the mesh points will be concentrated at thin interface after one time iteration and maintain concentrated at the moving interface all the time. The results in Fig. \ref{Shirkage} (c) suggest that we can incorporate our Lagrangian method with adaptive mesh technique. Within the Lagrangian solver,  we only need to adapt the initial mesh.
% Since the mesh points will maintain concentrated at the moving interface, we on
% An advatange of our approach is that we only need to adaptive the initial mesh.
As a limitation, for this problem the Lagrangian calculation cannot reach the equilibrium, in which the circular domain is disappeared. Such a problem can be handled easily by applying some Eulerian solver to the numerical solutions obtained by Lagrangian calculations at the late stage.

\begin{figure}[!h]
  \centering
  \includegraphics[width = 0.75 \linewidth]{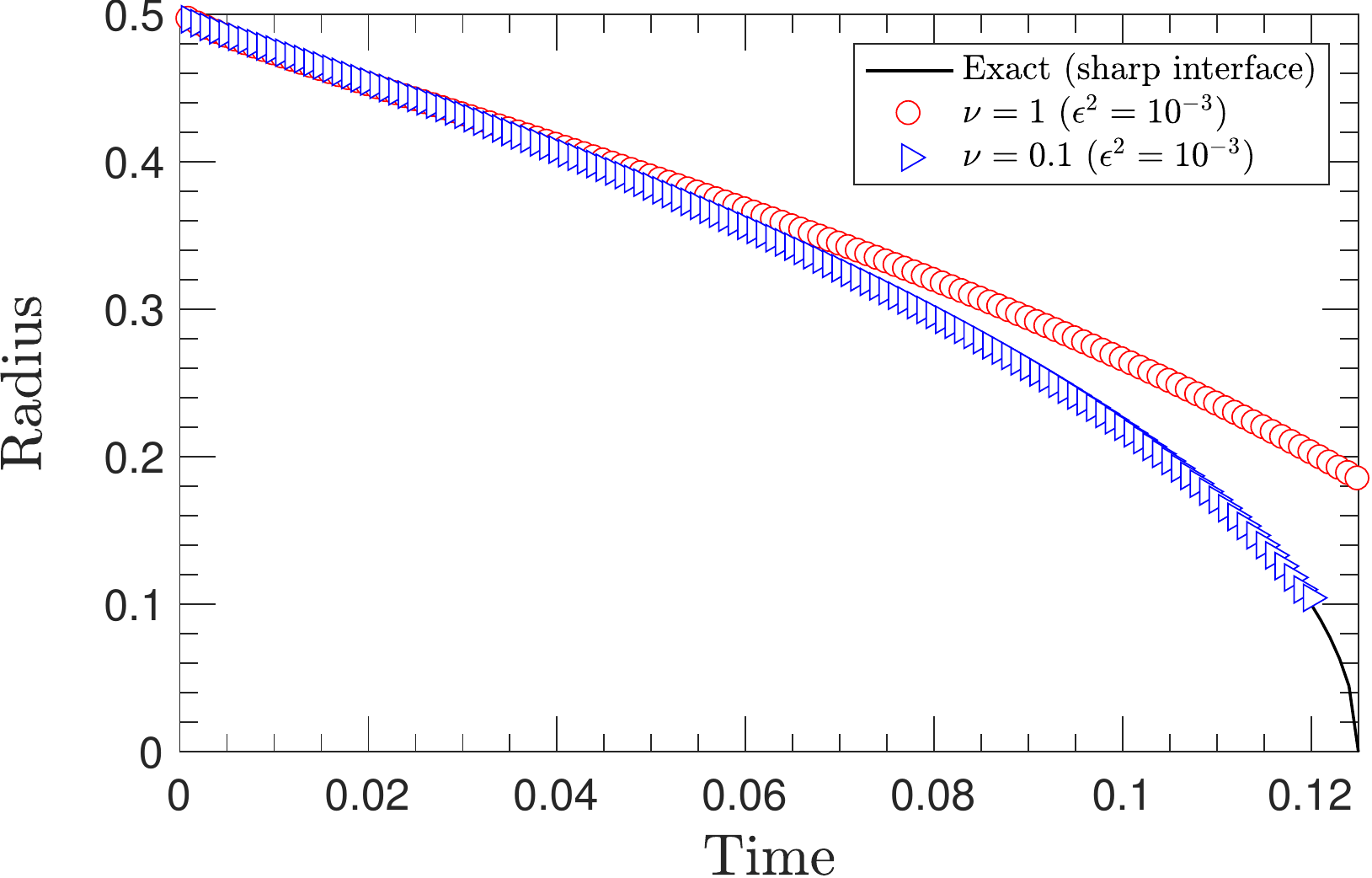}
  \caption{The evolution of interface as a function of time by our Lagrangian method for $\epsilon^2 = 10^{-3}$ and the different choice of $\nu$. The black line show the evolution of interface driven by mean curvature. } \label{Shirkage_time}
\end{figure}

Although we are mainly interested in the equilibrium solutions obtained by Lagrangian methods, we also compare the dynamics of Lagrangian approach with the original Eulerian approach by studying the evolution of diffuse interface in this examples.
It is well known that at the sharp interface limit, the movement of interface is driven by mean curvature flow, and $R(t) = \sqrt{R_0^2 - 2t}$, where $R(t)$ denotes the radius of the interface at time $t$ \cite{xu2019stability}. The singularity happens at $t = R_0/2$, which is the disappearing time. We compare the radius of the interface obtained by our numerical calculations for $\epsilon = 10^{-3}$ with $R(t)$ for different choices of $\nu$. We refer the authors to a similar comparison for some Eulerian methods. It can be noticed that for $nu = 0.1$, the evolution of sharp interface can be well approximated by our methods with small number of mesh points. Indeed, our initial mesh size is larger than $\epsilon$, it is impossible to get the right result by using Eulerian methods on this mesh \cite{merriman1994motion}. 
For $nu = 1$, it is expected that the movement of interface is slower, similar to previous example [see Fig. \ref{Res1} (c). Indeed, for large $\nu$, the second term in the dissipation actually dominate the dynamics of the Lagrangian method.
% As pointed out in the last subsection, the dynamics evolution will become slower for larger $\nu$.
% In \cite{xu2019stability}, the authors study this..  o different Eulerian approach. 

% We should emphasize that the additional $\mu |\nabla|^2$ term play an essential role in a successful Lagragian calculations by our scheme. % We test the case with $\nu = 0$, the compuations will terminate due to failure of finding a

% It can be noticed that in both cases, our meshes are well concentrated near the interface. 

\subsection{Phase-field model with the volume constraint}
In this subsection, we consider an Allen-Cahn type phase-field model with the volume constraint. % and show that our Lagrangian method can find the desirable equilibrium.
We impose the volume constraint by introducing a penalty term in the free energy. So the total free energy of the system is given by
   \begin{equation}\label{Energy_VolC}
        \begin{aligned}
          \mathcal{F} =   \int_{\Omega} \frac{1}{2} |\nabla \phi|^2 + \frac{1}{4 \epsilon^2} (\phi^2 - 1)^2 \dd \x + W_b \left(\int \phi \dd \x - A \right)^2. \\
        %  & =  - \int_{\Omega} \nu |\nabla \uvec|^2 + |\nabla \phi \cdot \uvec|^2 \dd \x \\
        \end{aligned}
   \end{equation}
   We take $\Omega = [-1, 1]^2$, $W_b = 1000$, $A = -3$ and $\epsilon^2 = 10^{-4}$,  and impose the Dirichlet boundary condition $\varphi(\x) = -1, \x \in \pp \Omega$, throughout this section.

\begin{figure}[!ht]
  \includegraphics[width = \linewidth]{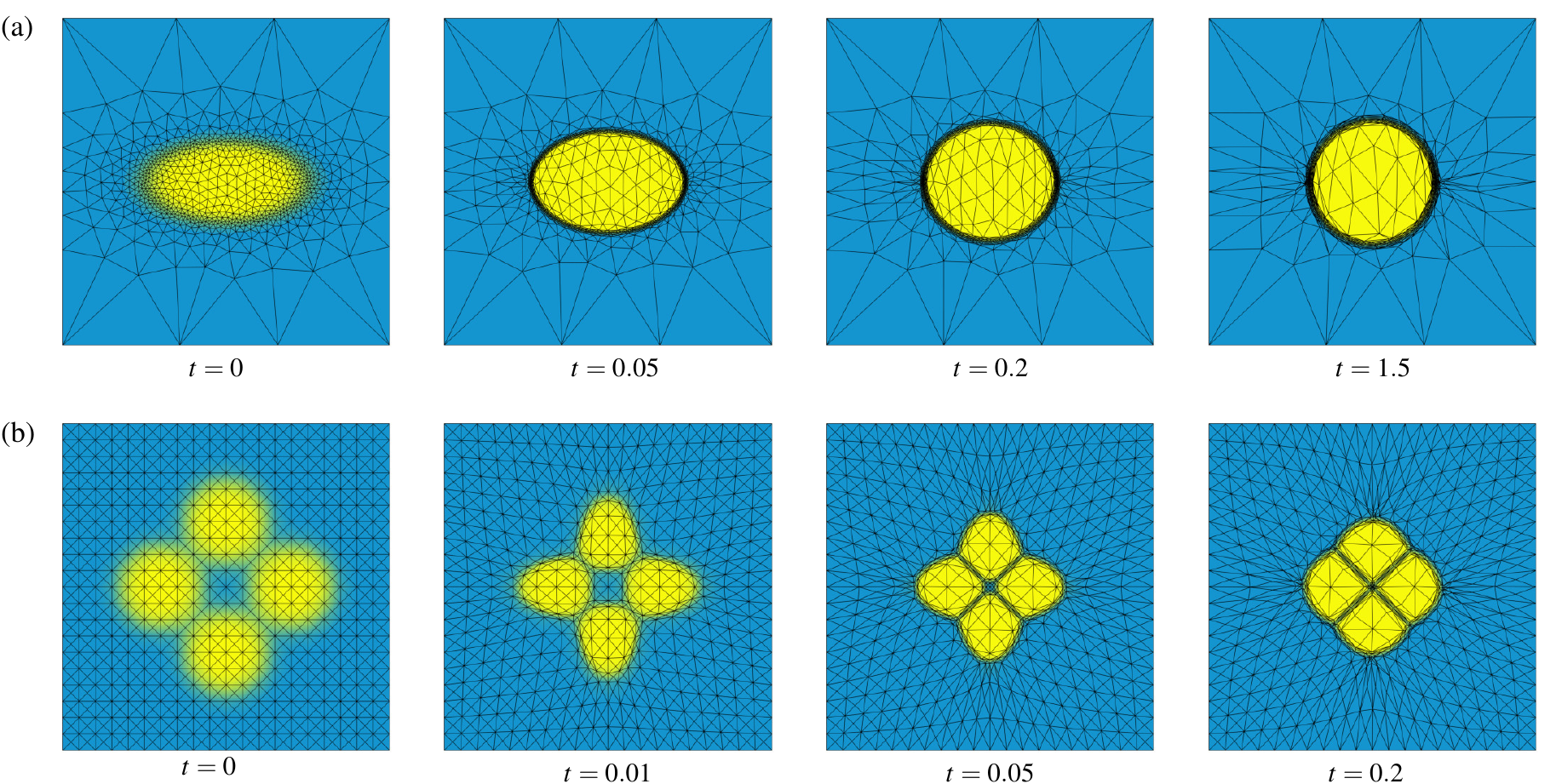} 
  \caption{Numerical results for phase-field model with volume constraints for $\epsilon^2 = 10^{-4}$ with different initial conditions [$\tau = 10^{-2}$]: (a) Single bubble. (b) Coalescence of four kissing bubbles.}\label{VolC}
\end{figure}

Fig. \ref{VolC} (a) shows numerical results for initial condition
   \begin{equation}
     \varphi_0(X, Y) = - \tanh( 10 (\sqrt{X^2 + 4 Y^2} - 1/2) ),
   \end{equation}
   %While Fig. \ref{VolC} (b) shows numerical results for initial condition
   %\begin{equation}
   %  \varphi_0(X, Y) = \max(- \tanh( 10 (r_1 - 1/2)), - \tanh( 10 (r_2 - 1/2))),
   %\end{equation}
  %  where $r_1 =\sqrt{X^2 + 4 Y^2}$ and $r_2 = \sqrt{4 X^2 + Y^2}$. %Both cases start with a non uniform mesh generated by DistMesh.
  in which we use a non-uniform mesh ($M = 1484$) generated by the DistMesh \cite{persson2004simple}
As expected, due to the effect of surface tension and the volume constraints, the bubble deforms into a circular bubble, and the mesh points keep concentrated at the thin interface when the shape of interface changes.
As a benefit of pure Lagrangian calculation, we can guarantee the numerical solution $\varphi_h(\x, t) \in [-1, 1]$.

% \subsection{Coalescence of two kissing bubbles}
   % We consider the coalescence of four issing bubbles with $\epsilon^2 = 1d-4$.
We also consider the initial condition
\begin{equation}
\varphi_0(X, Y) = - \sum_{i=1}^4 \tanh(15 (r_i - 1/3)) + 3,
\end{equation}
where $r_1 = \sqrt{(X - 0.4)^2 + Y^2}$, $r_2 = \sqrt{(X + 0.4)^2 + Y^2}$, $r_3 = \sqrt{X^2 + (Y - 0.4)^2}$ and $r_4 = \sqrt{X^2 + (Y + 0.4)^2}$. This is also a classical test problem in phase-field models \cite{liu2003phase, zhang2007adaptive}, which correspond to coalescence of four kissing bubbles. As time evolves, the four bubbles are expected to coalesce into a big bubble. The initial condition and numerical solutions obtained by pure Lagrangian calculations at various $t$ are shown in Fig. \ref{VolC} (b) [Uniform mesh, $M = 1600$]. Although the mesh points are concentrated immediately at the thin interface, the natural of our Lagrangian methods prevent four bubbles merging together. This is a limitation of our Lagrangian methods which can not handle topological changes in the phase-field model, since the kinematic (\ref{Transport}) is only valid locally. 

% , such as bubble draining \cite{yue2005diffuse} and liquid crystal colloids \cite{yue2004diffuse, zhou2008dynamic, wang2018formation}, modeled by phase field. as the topological change are allowed in the model.
% In some particular applications, such as bubble draining and colloidal partcles in liquid crystal, it might be an advantage of our par}

\begin{figure}[!th]
  \includegraphics[width = \linewidth]{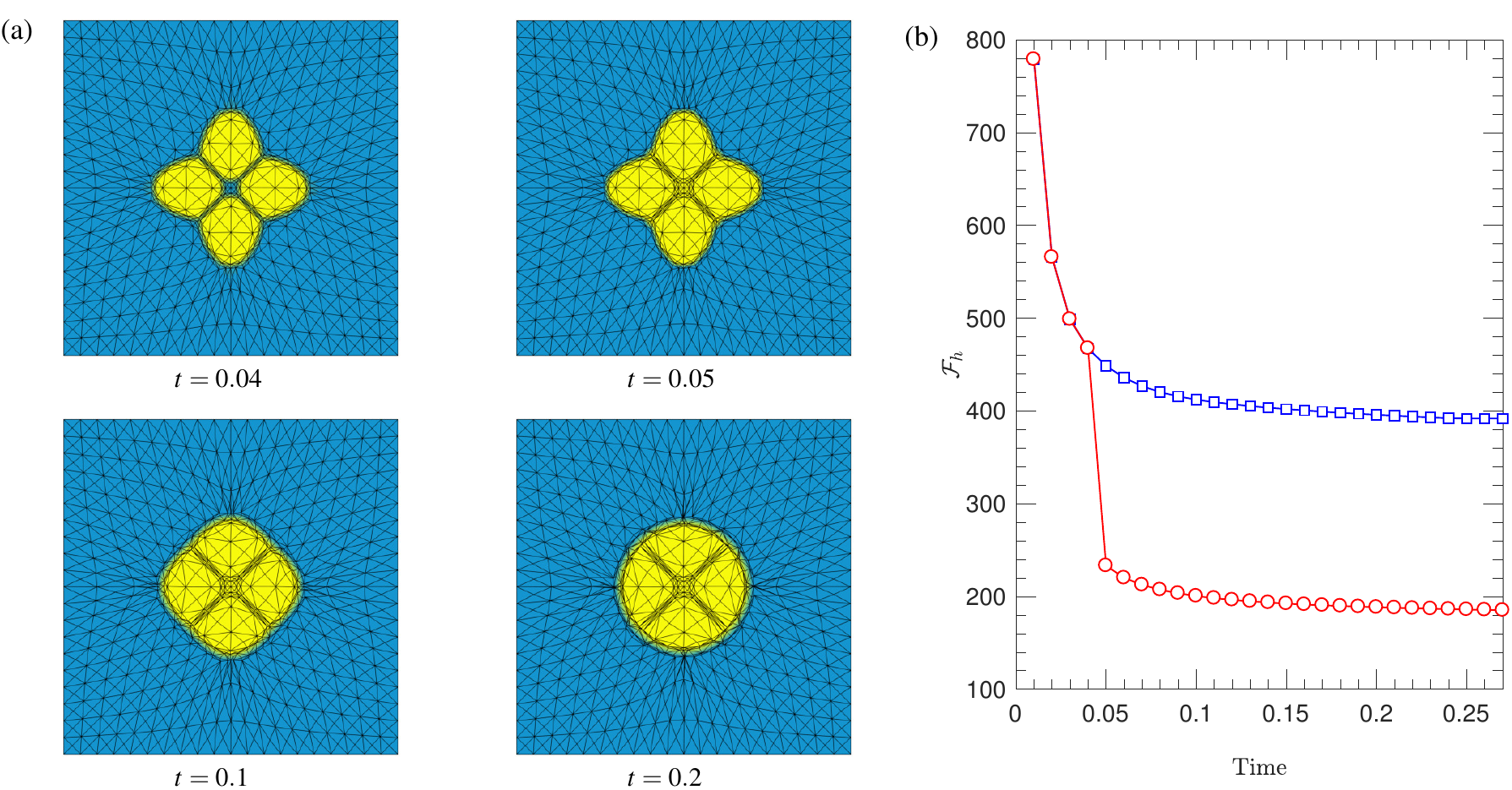}
  \caption{(a) Numerical results of ``coalescence of two kissing bubbles'' with an Eulerian step [$\epsilon^2 = 10^{-4}$, $\tau = 10^{-2}$]. (b) The total discrete energy with and without the Eulerian step.}\label{ALE}
\end{figure}

This drawback can be easily overcome by running a few Eulerian steps on the obtained mesh at the reinitialization step. 

A subtle issue is when shall we apply the Eulerian solver, which is problem-dependent in general. For phase-field models, the Eulerian steps are necessary to handle the topological changes, like the examples in Fig. \ref{Shirkage} and Fig. \ref{VolC} (b). 
In the current study, we are not going to discuss this issues in details. For the test problem shown in Fig.\ref{VolC} (b), we actually only need one Eulerian step to handle the topological change.
Fig. \ref{ALE} (a) shows the numerical results with  applying the Eulerian step at the fifth step ($t = 0.05$). Since we only do one Eulerian step, we didn't include the penalty term in (\ref{Energy_VolC}) to ensure maximum principle is preserved at the discrete level. Fig. \ref{ALE} (b)  shows the computed total discrete energy with and without the Eulerian step. It can be noticed that Eulerian step significantly decreases the discrete energy. A better performance can be achieved by applying local mesh coarsening \cite{bank1996algorithm}. 

It is worth mentioning that for some particular problems modeled by phase-field methods, such as colloidal particles in liquid crystals \cite{yue2004diffuse, zhou2008dynamic, wang2018formation}, in which the coalescence of colloidal particles is often not allowed, it might be an advantage to use our Lagrangian scheme to prevent the topological changes.

\subsection{``Slightly compressible'' flow}
In the final example, we consider a phase-field model with the total energy given by
\begin{equation}
 \int_{\Omega_0} \frac{1}{2} |F^{-\rm T} \nabla_{\X} \varphi_0|^2 + \frac{1}{4 \epsilon^2} (\varphi^2 - 1)^2 + \eta \left( \frac{1}{\det F} - 1 \right)^2 \det F \dd \X.
\end{equation}
where the last term can be viewed as a penalty term for the incompressible condition $\nabla \cdot \uvec$ in the incompressible two-phase flow \cite{liu2003phase, hyon2010energetic}. 
One can noticed that this form of free energy satisfies (\ref{{Assumption_WF}}).
This model is analogous to slightly compressible two phase flow \cite{temam2001navier}.

\begin{figure}[!hbt]
  \includegraphics[width = \linewidth]{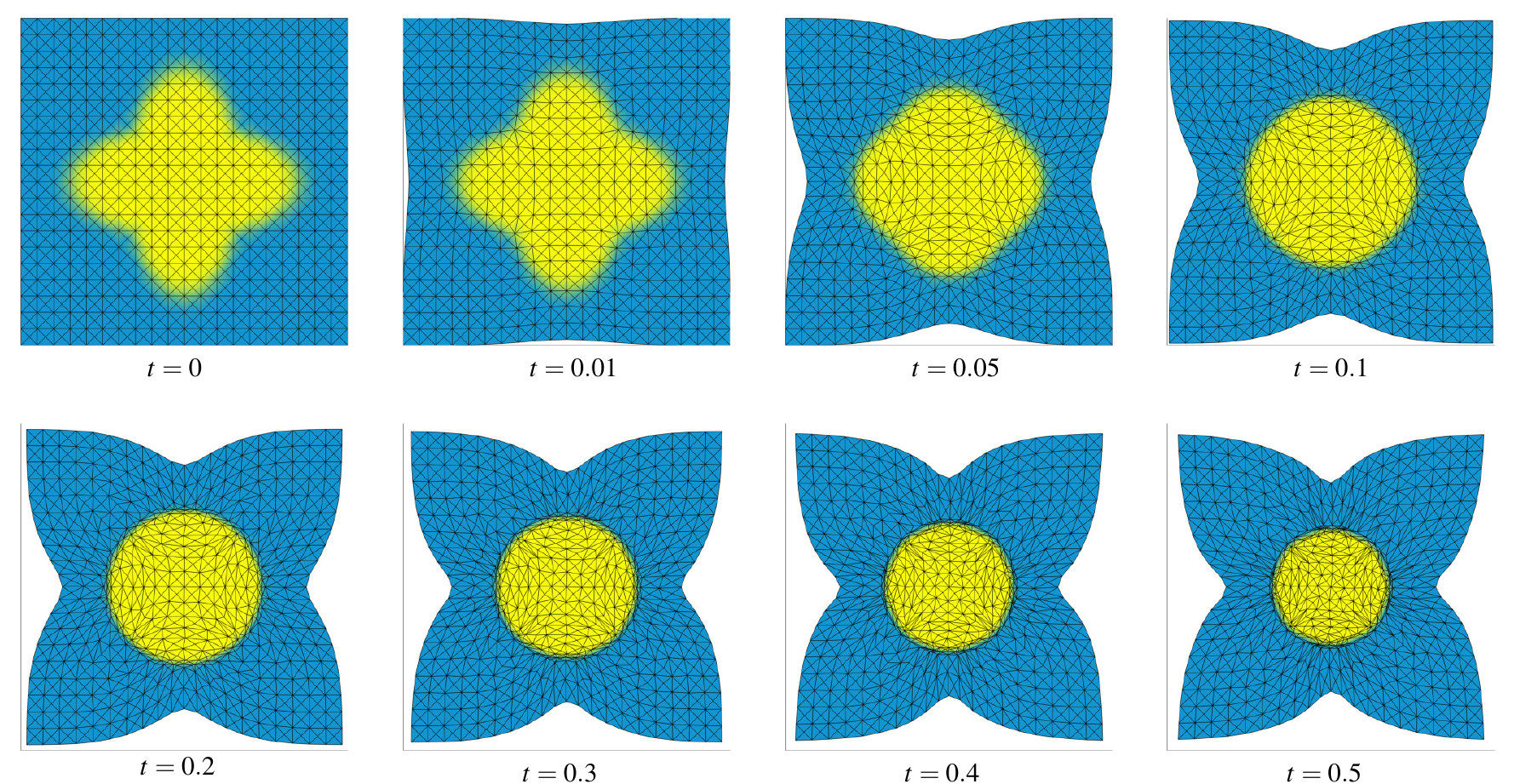} 
  \caption{Numerical results for ``slightly compressible'' phase-field model for $\epsilon^2 = 10^{-4}$ [$\tau = 10^{-2}$] at various time.}\label{InCompressible}
\end{figure}

Fig. \ref{InCompressible} shows numerical results for initial condition
 \begin{equation}
   \varphi_0(X, Y) = \max(- \tanh( 15 (r_1 - 0.7)), - \tanh( 15 (r_2 - 0.7))),
 \end{equation}
 where $r_1 =\sqrt{X^2 + 4 Y^2}$ and $r_2 = \sqrt{4 X^2 + Y^2}$, with $\epsilon^2 = 10^{-4}$ and $\nu = 10$. %Both cases start with a non uniform mesh generated by DistMesh
 Here, we impose the free boundary on the flow map $\x(\X, t)$, and take $\Omega_0 = [-1, 1]^2$, $\epsilon^2 = 10^{-4}$ and $\eta = 5000$. As expected, the bubble will also deform into a circular bubble, and shrink. Compared with previous examples, due to penalty terms on constraints of $\det F = 1$, the mesh will not immediately concentrate around the thin diffuse interface. One can also view the additional penalty term in the free energy as a regularization term, which improves the mesh quality.

   %  Since $\eta$ is not much large than $\frac{1}{4 \epsilon^2}$, in order to minimize the total energy, 

% We start with a model problem by
% taking $\Omega$ to be a unit disk, and considering different Dirichlet  
% We first study pattern formations of nematic liquid crystal confined on a disk with different Dirichlet boundary condition. % \cite{}.
% \subsection{Poisson Equation}

\section{Summary}
In this paper, we propose a variational Lagrangian scheme to a phase-field model, which can compute the equilibrium states of the original Allen-Cahn type phase field model, with a proper choice of $\varphi(\X)$.
Numerical examples show that our scheme has advantage in capturing the thin diffuse interface in the equilibria with a small number of mesh points. 
Our approach can be extended to general gradient flow system, especially those have equilibria with singularity, sharp interface and free boundary, such as Landau-de Gennes model for liquid crystal \cite{macdonald2015efficient, wang2017topological, wang2019order} and Ginzburg–Landau model for superconductivity \cite{du2005numerical}. 

There are still many limitations of our methods. As mentioned previously, it is important to combine some Eulerian solver with the Lagrangian solver presented here. For Allen-Cahn like gradient flow system, choosing a proper initial data $\varphi_0$ is crucial in order to get a reasonable results, as the kinematic relations, i.e., the transport equations, in the Lagrangian approach may only valid locally. Moreover, a pure Lagrangian scheme are not able to deal with the large deformation and topological change. These drawbacks might be overcome by incorporating Eulerian steps into the Lagrangian calculations. Moreover, from a numerical approximation perspective, as shown in Fig. \ref{Error1}, Lagrangian methods have advantage in capture the interface, while Eulerian methods can a achieve better numerical approximations in the bulk region. So it is necessary to combine both approaches to achieve high accuracy with small computational efforts. The main difficulty in combining a Lagrangian solver with an Eulerian solver is to decide which one to apply during the computational procedure. Another drawback for our methods, the Lagrangian mesh may still become too skew even with the regularization $\nu |\nabla \uvec|^2$ term in the dissipation. The local mesh refinement or coarsening is needed to improve the robustness of the Lagrangian calculations. These points will be the subjects of future work.

% \section*{Appendix}

\appendix 

\section{Derivation of (\ref{F_x})}
In this appendix, we provide a detailed calculation of (\ref{F_x}). For any smooth map $\y(\X, t) = \tilde{\y} (\x(\X, t), t)$, we denote
\begin{equation}
\x^{\epsilon} = \x + \epsilon \y, \quad F^{\epsilon} = \nabla_{\X} \x^{\epsilon}.
\end{equation}
Then we have
\begin{equation*}
\begin{aligned}
  & \frac{\dd}{\dd \epsilon} \Big|_{\epsilon = 0} \mathcal{F}(\x^{\epsilon})  =  \frac{\dd}{\dd \epsilon} \Big|_{\epsilon = 0} \left( \int_{\Omega^{\epsilon}} W(\varphi(\x^{\epsilon}), \nabla_{\x^{\epsilon}} \varphi(\x^{\epsilon})) \dd \x^{\epsilon} \right) \\
  & =  \frac{\dd}{\dd \epsilon} \Big|_{\epsilon = 0} \left( \int_{\Omega^{0}} W(\varphi_0(\X), (F^{\epsilon})^{-\rm{T}} \nabla_{X} \varphi_0)  \det F^{\epsilon} \dd \X \right) \\
  % & =  % \int_{\Omega^{0}} \frac{\pp W}{\pp \varphi} \nabla_{\x} \varphi \frac{\pp \y}{\pp \X} \\
  % &  =  \int_{\Omega^{0}} \left( \frac{\pp W}{\pp \nabla_{\x} \varphi} \otimes  \nabla_{\X} \varphi_0 \right) : \frac{\pp (F^{\epsilon})^{-\rm{T}} }{\pp \epsilon}  \det F~\dd \X \\
  %  & \quad +  \int_{\Omega^{0}} W(\varphi(\x(\X, t)), F^{-\rm{T}} \nabla_{X} \varphi_0) \det(F) F^{-\rm{T}} : \nabla_{\X} \y  \dd \X \\
  & =  \int_{\Omega^{0}}  - \left( \frac{\pp W}{\pp \nabla \varphi} \otimes  \nabla_{\X} \varphi \right) : F^{-\rm T} (\nabla_{\X} \y)^{\rm T}  F^{-\rm T}   \det F  +  W(\varphi,  \nabla \varphi) \det F F^{-\rm{T}} : \nabla_{\X} \y  \dd \X \\
  &  = \int_{\Omega^0} \det(F) \left( - \left( \frac{\pp W}{\pp \nabla \varphi} \otimes  F^{-\rm{T}} \nabla_X \varphi_0 \right) +  W(\varphi, \nabla \varphi) \rm{I} \right)   : (\nabla_{\X} \y F^{-1})^{\rm T}  \dd \X \\
  & = \int_{\Omega^0} \det(F) F^{-1}\left( - \left( \frac{\pp W}{\pp \nabla \varphi} \otimes  F^{-\rm{T}} \nabla_X \varphi_0 \right) +  W(\varphi, \nabla \varphi) \rm{I} \right)   : (\nabla_{\X} \y)^{\rm T}  \dd \X \\
  % &  = \int_{\Omega^0} \nabla_{X} \cdot \left( \det F  F^{-\rm{T}} \left( \left( \frac{\pp W}{\pp \nabla \varphi} \otimes  F^{-1} \nabla_X \varphi \right) - W(\varphi, \nabla \varphi) I \right) \right) \cdot \y(X)\dd \X + B.T.
\end{aligned}
\end{equation*}

Pushing forward the above result to the Eulerian coordinates and performing integration by parts, we have
\begin{equation}
\begin{aligned}
 \frac{\dd}{\dd \epsilon} \Big|_{\epsilon = 0} \mathcal{F}(\x^{\epsilon}) & = \int_{\Omega}\left( - \left( \frac{\pp W}{\pp \nabla \varphi} \otimes \nabla \varphi \right) +  W(\varphi, \nabla \varphi) \rm{I} \right) : \nabla \tilde{\y} \dd \x \\
  & = \int_{\Omega} \left( \nabla \cdot \left(  \frac{\pp W}{\pp \nabla \varphi} \otimes \nabla \varphi -  W(\varphi, \nabla \varphi) \mathrm{I} \right) \right)  \cdot \tilde{\y}  \dd \x, \\
  \end{aligned}
\end{equation}
where the boundary term vanishes due to the boundary condition of $\tilde{y}$ or $\varphi$. Hence,
\begin{equation}
\frac{\delta \mathcal{A}}{\delta \x} = - \frac{\mathcal{F}}{\delta \x} =  - \nabla \cdot \left( \left( \frac{\pp W}{\pp \nabla \varphi} \otimes \nabla \varphi \right)  - W \mathrm{I}  \right).
\end{equation}

\iffalse
In Lagrangian coordinates, the dissipation function $2 \mathcal{D}(\x, \x_t)$ can be written as
\begin{equation}
2 \mathcal{D}[\x, \x_t] = \int_{\Omega_0} |F^{-\rm T} \nabla_{X} \varphi_0 \cdot \x_t |^2      + \nu |F^{-\rm{T}} \nabla_{\X} \x_t |^2  \det F \dd \X.
\end{equation}
By a direct computation, we have
\begin{equation}
  \frac{\delta \mathcal{D}}{\delta \x_t} =  \nu \Delta \uvec + (\nabla \phi \otimes \nabla \phi) \uvec 
\end{equation}
in Eulerian coordinates.

Hence, the force balance equation is given by
\fi

\section{The form of semi-discrete equation} \label{App_B}
Here we provide detailed calculations to $\frac{\delta \mathcal{A}}{\delta \Xi_i}$ and $\frac{\delta \mathcal{\mathcal{D}}}{\delta \Xi_i^{\prime}}$ in each element $\tau_e$. The calculations are very close to the variation with respect to $\x$ and $\x_t$ in the continuous level.

Recall the discrete free energy $\mathcal{F}_h(\bm{\Xi}(t))$ and the discrete dissipation functional $\mathcal{D}_h(\bm{\Xi}(t), \bm{\Xi}^{\prime}(t))$ are given by
\begin{equation}\label{Action_1_h}
\begin{aligned}
  \mathcal{F}_h (\bm{\Xi}(t)) % & = \int_{\Omega_0} W(F^{-\rm{T}}\nabla_{\X} \varphi_0, \varphi_0) \det F \dd \X \\
  & = \sum_{e = 1}^{M} \int_{\tau_e}  W(\varphi_0, F^{-\rm T}_{e} \nabla_{\X} \varphi_0) \det F_e \dd \X, \\
  % & = \sum_{i = 0}^N \sum_{k \in G(i)} \int_{\tau_k} W(F^{-\rm T} \varphi_0, \varphi_0)   \det F_k \dd X \dd t\\
\end{aligned}
\end{equation}
and
\begin{equation}\label{Dis_1_h}
\begin{aligned}
  \mathcal{D}_h(\bm{\Xi}(t), \bm{\Xi}^{\prime}(t))  = \frac{1}{2} \sum_{e = 1}^M \int_{\tau_e}  & \Big| (F_e^{-\rm T} \nabla_{\X} \varphi_0) \cdot (\sum_{j = 1}^N \bm{\xi}_j' \psi_j(\X) ) \Big|^2  \\
  & + \nu \Big| \nabla_{\X} (\sum_{j = 1}^N \bm{\xi}_j' \psi_j(\X)) \Big|^2   \det F_e  \dd \X,
  % \sum_{p = 1}^{2} \left( \sum_{j = 1}^N \sum_{q = 1}^2  \left(  (\nabla_{X} \dvec_0)F^{-1} \right)_{pq}  \xi_j^{(q)}(t) \varphi_j \right)^2 \det F_e(t)   \dd \X \\
\end{aligned}
\end{equation}
respectively.
Let $N(i)$ be all the indices $e$ such that $\X_i$ is contained in $\tau_e$ for given $\X_i \in \mathcal{N}_h$. Then for $\chi_i = \xi_{i, x}$ or $\xi_{i, y}$, we have
\begin{equation*}
  \begin{aligned}
    \frac{\pp \mathcal{F}_h}{\pp \chi_i} & = \sum_{e \in N(i)} \int_{\tau_e} \frac{\pp}{\chi_i} \left(  W \left( \varphi_0(X), F_e^{-\rm{T}} \pp_{X} \varphi_0 \right) \det F_e \right) \dd X \\
    %& = \sum_{en(e, \alpha) = i} \int_{\tau_e}  \left( \frac{\pp W}{\pp (\nabla_{\x} \varphi)} \otimes \nabla_{\X} \varphi_0 \right) : \frac{\pp F_e^{-\rm{T}}}{\pp x_{\alpha}^e} \det F^e + W \frac{\pp \det F_e}{ \pp x_{\alpha}^e} \dd X \\
    %& = \sum_{en(e, \alpha) = i} \int_{\tau_e}  \left( - \frac{\pp W}{\pp (\nabla_{\x} \varphi)} \otimes \nabla_{\X} \varphi_0 \right) : F_e^{-\rm{T}} \left( \frac{\pp F_e}{\pp x_{\alpha}^e} \right)^{\rm T} F^{-\rm{T}} \det F_e + W \det F_e F_e^{-\rm T} : \frac{\pp F_e}{\pp x_{\alpha}^e} \dd X \\
    %& =  \sum_{en(e, \alpha) = i} \int_{\tau_e}  \left( - \frac{\pp W}{\pp (\nabla_{\x} \varphi)} \otimes \nabla_{\X} \varphi_0 \right) F_e^{-1} : F_e^{-\rm{T}} \left( \frac{\pp F_e}{\pp x_{\alpha}^e} \right)^{\rm T} \det F_e + W \det F_e F_e^{-\rm T} : \frac{\pp F_e}{\pp x_{\alpha}^e} \dd X \\
    % & \frac{\pp I_h}{\pp y_i} = \sum_{en(e, \alpha) = i} \int_{\tau_e} \frac{\pp}{y_{e}^{\alpha}} \left(  W \left( \varphi_0(X), F_e^{-\rm{T}} \pp_{X} \varphi_0 \right) \det F_e \right) \dd X \\
    % & =  \sum_{en(e, \alpha) = i} \int_{\tau_e} \left(  \left( - \frac{\pp W}{\pp (\nabla_{\x} \varphi)} \otimes \nabla_{\X} \varphi_0 \right) F_e^{-1} : F_e^{-\rm{T}} \left( \frac{\pp F_e}{\pp x_{\alpha}^e} \right)^{\rm T} + W  F_e^{-\rm T} : \frac{\pp F_e}{\pp x_{\alpha}^e} \right) \det F_e \dd X \\
     & =  \sum_{e \in N(i)} \int_{\tau_e}   F_e^{-1}\left(  \left( - \frac{\pp W}{\pp (\nabla_{\x} \varphi)} \otimes \nabla_{\X} \varphi_0 \right) F_e^{-1} + W   \mathrm{I}  \right): \left(\frac{\pp F_e}{\pp \chi_i} \right)^{\rm{T}} \det F_e \dd X. \\
  \end{aligned}
\end{equation*}
The numerical integration above can be computed by using centroid method.

Meanwhile, for the dissipation part, direct computation results in 
\begin{equation}
  \begin{aligned}
    \frac{\pp \mathcal{D}_h}{\pp \xi_{i, x}'} = \sum_{e \in N(i)} \int_{\tau_e} &  \sum_{j=1}^N \left( (\varphi_x^2 \psi_i(\X) \psi_j(\X)) \xi_{j, x}' + (\varphi_x \varphi_y \psi_i(\X) \psi_j(\X)) \xi_{j, y}'  \right)  \\
    & + \nu  (\nabla_{\X}  \psi_i \cdot \nabla_{\X} \psi_j) \xi_{i, x}' \det F_e \dd \X,  \\
    % \int_{\tau_e}  \sum_{p = 1}^{2} \left( \sum_{j = 1}^N \sum_{q = 1}^2  \left(  (\nabla_{X} \dvec_0)F^{-1} \right)_{pq} \sum_{j = 1}^N \dot{\xi}_j^{(q)}(t) \varphi_j \right) \left(  (\nabla_{X} \dvec_0)F^{-1} \right)_{pk} \varphi_i \\
   %  & =  \sum_{q = 1}^2  \sum_{j=1}^N  \sum_{p = 1}^2 \left(  (\nabla_{X} \dvec_0)F^{-1} \right)_{pq}  \left(  (\nabla_{X} \dvec_0)F^{-1} \right)_{pk}   \varphi_i \varphi_j \dot{\xi}_j^{(q)}(t) 
  \end{aligned}
\end{equation}
and
\begin{equation}
  \begin{aligned}
    \frac{\pp \mathcal{D}_h}{\pp \xi_{i, y}'} = \sum_{e \in N(i)} \int_{\tau_e} &  \sum_{j=1}^N \left( (\varphi_x \varphi_y \psi_i(\X) \psi_j(\X)) \xi_{j, x}' + (\varphi_y^2 \psi_i(\X) \psi_j(\X)) \xi_{j, y}'  \right)  \\
    & + \nu  (\nabla_{\X} \psi_i \cdot \nabla_{\X} \psi_j) \xi_{j, y}' \det F_e \dd \X.  \\ 
  \end{aligned}
\end{equation}
Hence,
\begin{equation}
{\sf D}(\bm{\Xi}(t)) = {\sf M}(\bm{\Xi}(t)) + \nu {\sf K}(\bm{\Xi(t)})
\end{equation}
with
\begin{equation*}
  {\sf M} =
  \begin{pmatrix}
    {\sf M}_{xx} &  {\sf M}_{xy} \\
    {\sf M}_{yx} &  {\sf M}_{yy}, \\
  \end{pmatrix}
  \quad
    {\sf K} =
  \begin{pmatrix}
    {\sf K_0} &  {\sf 0} \\
     {\sf 0} &  {\sf K_0} \\
  \end{pmatrix}.
\end{equation*}
Here ${\sf M}_{\alpha \beta}$ ($\alpha, \beta = x, y$) is the modified mass matrix defined by
\begin{equation}
{\sf M_{\alpha \beta}}(i, j) = \sum_{e \in N(i)} \pp_{\alpha}\varphi(\x_e) \pp_{\beta}\varphi(\x_e)  \det F_e \int_{\tau_e} \psi_i \psi_j     \dd \X,
\end{equation}
where $\x_e$ is the centroid of $\x^t(\tau_e)$, and  ${\sf K_0}$ is the modified stiff matrix defined by
\begin{equation}
{\sf K_0}(i, j) = \sum_{e \in N(i)} \det F_e \int_{\tau_e} \nabla_{\X} \psi_i \cdot  \nabla_{\X} \psi_j   \dd \X.
\end{equation}

Next we show that positive definiteness of ${\sf D}$ if ${\bm \Xi} \in \mathcal{S}_{ad}^h$ and $\nu > 0$. We first show that ${\sf M}$ is positive semi-definite. Recall that ${\sf M}$ is obtained by the summing the results on each element the over mesh, we only need to show ${\sf M}^{e}({\bm \Xi})$ is positive semi-definite, for each $e \in \{ 1, 2, \ldots M \}$. The positive semi-definiteness of ${\sf M}^{e}({\bm \Xi})$ can be proved by looking at the principal minor formed by all non-zero element, which is $6 \times 6$ matrix given by
% the non-zero element fo ${\sf M}^e$ form a $6 \times 6$ matrix, given by
\begin{equation}
\widetilde{\sf M}^e = 
\begin{pmatrix}
  {\sf M}_{xx}^e & {\sf M}_{xy}^e \\
  {\sf M}_{xy}^e & {\sf M}_{yy}^e \\
\end{pmatrix},
\end{equation}
where $M_{\alpha \beta}^e$ is defined by
\begin{equation}
M_{\alpha \beta}^e = \frac{1}{12}  |\tau_e| \det F_e \left(  \pp_{\alpha} \varphi (\x_e) \pp_{\beta} \varphi (\x_e) \right) \begin{pmatrix}
  2 & 1 & 1 \\
  1 & 2 & 1  \\
  1 & 1 & 2 \\
\end{pmatrix} =   \pp_{\alpha} \varphi (\x_e) \pp_{\beta} \varphi (\x_e) {\sf M}^e_0,
\end{equation}
where ${\sf M}_0^e$ is positive definite matrix since $\det F_e > 0$. Due to the positive definiteness of ${\sf M}_0^e$, we can write $M_0^e$ as $M_0^e = \sqrt{{\sf M}_0^e} \sqrt{{\sf M}_0^e}$. 
Then $\forall {\bf a} = ({\bf a}_1^{\rm T}, {\bf a}_2^{\rm T})^{\rm T} \in \mathbb{R}^6$ with ${\bf a}_i \in \mathbb{R}^3$, by direct calculation, we have
\begin{equation*}
  \begin{aligned}
{\bf a}^{\rm T} \widetilde{\sf M}^e {\bf a} & = \left( \pp_x \varphi (\x_e) {\bf a}_1^{\rm T} \sqrt{{\sf M}_0^e} +  \pp_y \varphi (\x_e) {\bf a}_2^{\rm T} \sqrt{{\sf M}_0^e} ) (\pp_x \varphi (\x_e)  \sqrt{{\sf M}_0^e} {\bf a}_1 +  \pp_y \varphi (\x_e)  \sqrt{{\sf M}_0^e} {\bf a}_2 \right) \\
& = \left \| \pp_x \varphi (\x_e)  \sqrt{{\sf M}_0^e} {\bf a}_1 +  \pp_y \varphi (\x_e)  \sqrt{{\sf M}_0^e} {\bf a}_2  \right \|^2 \geq 0.
  \end{aligned}
\end{equation*}
So $\widetilde{\sf M}^e$ is positive semi-definite, which indicates that ${\sf M}^e({\bm \Xi})$ is positive semi-definite if ${\bm \Xi} \in \mathcal{S}_{ad}^h$. Next, we show that ${\sf K}({\bm \Xi})$ is positive definite,  we only need to show that $ \sf K_0 ({\bf \Xi})$ is positive definite, which follows the positive definiteness of the standard stiffness matrix in the finite element method. Indeed, for $\forall \bm{\alpha} \in \mathbb{R}^N,$ 
\begin{equation*}
  \bm{\alpha}^{\rm T} {\sf K_0} \bm{\alpha} = \sum_{e = 1}^M \int_{\tau_e} \Big|\sum_{i=1}^N \alpha_i \nabla \psi_i \Big|^2 \det F_e \dd \X 
  % \geq c_b \sum_{e = 1}^M \int_{\tau_e} \Big|\sum_{i=1}^N \alpha_i \nabla \psi_i \Big|^2  \dd \X \geq 0.
  \geq c_b \int_{\Omega} \Big| \nabla \left( \sum_{i=1}^N   \alpha_i  \psi_i \right) \Big|^2 \geq 0,
\end{equation*}
where $c_b$ is defined by $c_b  = \min_{e \in \{1, 2, \ldots M \}} \det F_e$, the the equality holds only if $\alpha_i = 0$, $i = 1, 2, \ldots N$. Since for  ${\bm \Xi} \in \mathcal{S}_{ad}^h $, ${\sf M}$ is positive semi-definite, ${\sf K}$ is positive definite, we can conclude that ${\sf K}$ is positive define for $\nu > 0$. Noticed that $\det {\sf M} = 0$, so it is important to have non-zero $\nu > 0$ to guarantee the positive definiteness of ${\sf D}$.

\section{A failed example}
As mentioned previously, a pure Lagrangian calculation is sensitive to the choice of $\varphi_0$. This problem is somehow easy to deal with for the phase-field model, as it is nature to choose $\varphi_0 \in [-1, 1]$. In this appendix, we consider an extremely example by taking
\begin{equation}\label{Initial_APP}
\varphi_0(X, Y) = 2.5 (X^2 - 1)(Y^2 - 1) - 1, \quad (X, Y) \in [-1, 1]^2
\end{equation}
The boundary condition are same to section 4.2.  
 
Fig. \ref{Fail} (a) - (d) show the numerical solutions and computed meshes by our Lagrangian scheme for $\epsilon = 10^{-3}$ and $\nu = 10$ at various time. Although the mesh points can be concentrated at the thin interface, the dynamics of Lagrangian calculation is quite different with Eulerian approach, as shown in Fig. \ref{Fail} (e), and fail to get the right equilibrium. With Eulerian method, due to the discrete maximum principle, the numerical solutions will be in $[-1, 1]$ after one iteration ($t = 10^{-2}$), Then the bubble will deform into a circular bubble and shrink as in Fig. \ref{Shirkage}. But in the Lagrangian approach, since the value at each mesh point cannot be changed, the only way to minimize the total energy is to minimize the size of the region with $\varphi > 1$, and the flow map will tend to be singular at $(0, 0)$, which results in a poor mesh quality at the later stage of the Lagrangian calculations. 
% and cannot result in a reasonable equilibrium. 
% Moreover, in this case, in order to minimize the the free energy, the flow map will tend to be singular at $(0,0)$, the quality of mesh will also become terrible. 

This example illustrated the importance of a suitable $\varphi_0$. For general problems, we can use Eulerian approaches to obtain a proper $\varphi_0$, or combine the Eulerian methods with Lagrangian methods in the simulation to improve the robustness of the numerical scheme.
% Since the value in mesh points cannot be changed in pure Lagrangian calculations,
   \begin{figure}[!ht]
  \includegraphics[width = \linewidth]{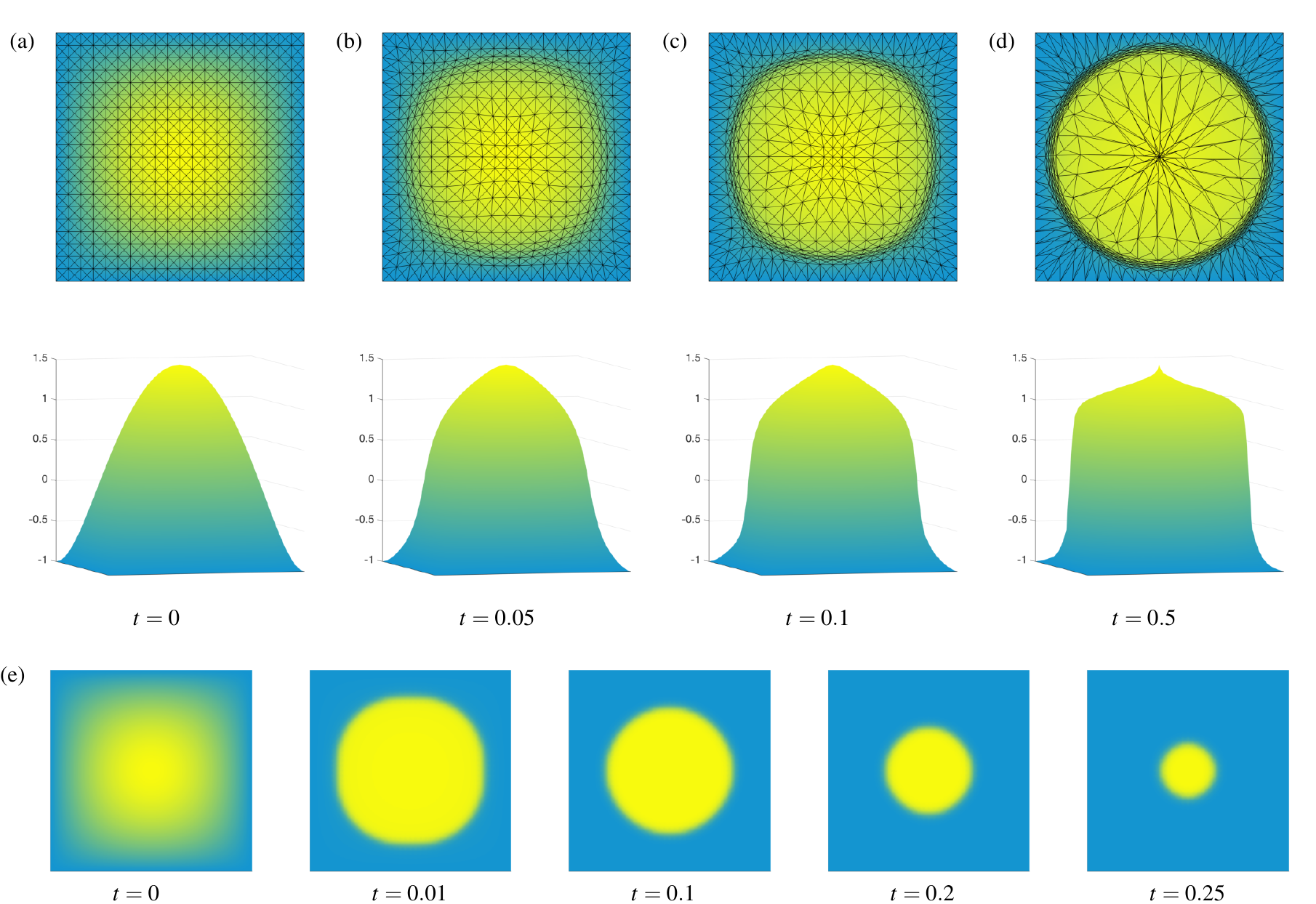} 
  \caption{(a) - (d) The meshes and computed solutions by the Lagrangian scheme at various time for the initial condition (\ref{Initial_APP}) for $\epsilon^2 = 10^{-3}$ and $\nu = 10$ [Uniform mesh. $M = 1600$]. (e) Numerical solutions for the initial condition (\ref{Initial_APP}) by an Eulerian method in a uniform mesh ($M = 64000$). The Eulerian solver used here is the one described in the section 4.}\label{Fail}
   \end{figure}

\section*{Acknowledgement}
The authors acknowledge the partial support of NSF (Grant DMS-1759536). % We thank Dr. Teng-fei Zhang for helpful discussions.
Y. Wang would also like to thank Department of Applied Mathematics at Illinois Institute of Technology for their generous support and for a stimulating environment.

% Substituting (\ref{x_h}) into $\mathcal{A}$ and $\mathcal{D}$, we have

\iffalse
Hence,
\begin{equation}
  \begin{aligned}
    \frac{\delta \mathcal{A}_h}{\delta \xi_{i}^{(k)}} &  = \sum_{e \in N(i)} \int_{\tau_e} \left[ \frac{1}{2} | (\nabla_{\X}\dvec_0) F_e^{-1}|^2 + \frac{1}{4\eta^2} (|\dvec_0|^2 - 1)^2 \right] \mathrm{I} F^{-\rm{T}} : \frac{\pp F}{\pp \xi_{i}^k} \det F_e  \\
    & - ( (\nabla_{\X} \dvec_0) F^{-1}): (\nabla_{\X} \dvec_0)  F^{-1}_e \frac{\pp F}{\pp \xi_{i}^k}  F^{-1}_e  \det F_e   \dd \X \\
    & = \sum_{e \in N(i)} \int_{\tau_e} \left( - \left( (\nabla_X d_0) F^{-1} \right)^{\rm T} ((\nabla_X d_0) F^{-1}) + W I  \right) F^{-\rm{T}} : \frac{\pp F}{\pp \xi_{i}^k} \det F_e \dd \X
    % & = - \int_{0}^T\sum_{e = 1}^{M} \int_{\tau_e} - (\nabla_{\X}\dvec_0 F_e^{-1}) : \nabla_{\X}\dvec_0 F_e^{-1} \frac{\pp F_e}{\pp a_e^{\alpha}}\\
    % & + \frac{1}{2} |\nabla_{\X} \dvec_0 F_e^{-1}|^2 (F_e^{-T} : \frac{\pp F_e}{\pp a_e}) \\
  \end{aligned}
\end{equation}
\fi

% \bibliographystyle{siam}
% \bibliographystyle{siamplain}
\bibliography{PF}

\end{document}